\newcolumntype{L}{>{$}l<{$}}
\newcommand\HH{\mathcal{H}}
\newcommand\J{\mathcal{J}}
\newcommand\R{\mathbb{R}}
\newcommand\x{\underline{x}}
\newcommand\y{\underline{y}}
\newcommand\Dd{\mathcal{D}}
\newcommand\E{\mathbb{E}}
\DeclareMathOperator\Var{Var}
\newcommand\dd{\mathrm{d}}
\newcommand\CC{\mathcal{C}}
\newcommand\I{\mathcal{I}}
\newcommand\nut{\tilde{\nu}}
\newcommand\Pt{\mathcal{P}}
\newcommand{\enstq}[2]{\left\{#1~\middle|~#2\right\}}
\DeclareMathOperator\Vol{Vol}
\newcommand\one{\mathds{1}}
\newcommand\numberthis{\addtocounter{equation}{1}\tag{\theequation}}
\newcommand\jump{\par\medskip}
\newcommand\quand{\quad\text{and}\quad}
\numberwithin{equation}{section}
\newcommand{\be}{\begin{equation}}
\newcommand{\ee}{\end{equation}}
\newcommand{\bega}{\begin{equation}\begin{aligned}}
\newcommand{\eega}{\end{aligned}\end{equation}}
\newcommand{\tpitchfork}{%
  \raise-0.1ex\vbox{
    \baselineskip\z@skip
    \lineskip-.52ex
    \lineskiplimit\maxdimen
    \m@th
    \ialign{##\crcr\hidewidth\smash{$-$}\hidewidth\crcr$\pitchfork$\crcr}
  }%
}
\theoremstyle{plain}
\newtheorem{theo}{Theorem}[section]
\newenvironment{theorem}%
  {\begin{mdframed}[backgroundcolor=white]\begin{theo}}%
  {\end{theo}\par\vspace{0.1cm}\end{mdframed}}
\theoremstyle{plain}
\newtheorem{coro}[theo]{Corollary}
\newenvironment{corollary}%
  {\begin{mdframed}[backgroundcolor=white]\begin{coro}}%
  {\end{coro}\par\smallskip\end{mdframed}}
\theoremstyle{plain}
\newtheorem{lemm}[theo]{Lemma}
\newenvironment{lemma}%
  {\begin{mdframed}[backgroundcolor=white]\begin{lemm}}%
  {\end{lemm}\par\vspace{0.cm}\end{mdframed}}
\theoremstyle{plain}
\newtheorem{prop}[theo]{Proposition}
\newenvironment{proposition}%
  {\begin{mdframed}[backgroundcolor=white]\begin{prop}}%
  {\end{prop}\par\vspace{0.cm}\end{mdframed}}
\theoremstyle{definition}
\newtheorem{defn}[theo]{Definition}
  {\begin{mdframed}[backgroundcolor=white]\begin{defn}}%
  {\end{defn}\par\vspace{0.cm}\end{mdframed}}
\theoremstyle{definition}
\newtheorem{remark}[theo]{Remark}
\def\blfootnote{\gdef\@thefnmark{}\@footnotetext}
\renewcommand*\env@matrix[1][*\c@MaxMatrixCols c]{%
  \hskip -\arraycolsep
  \let\@ifnextchar\new@ifnextchar
  \array{#1}}
\begin{document}

\title{{\bf Tightness of Stationary Nodal Measures}}
\author{Louis Gass and Giovanni Peccati}
\maketitle
\blfootnote{University of Luxembourg, Department of Mathematics}
\blfootnote{Research supported by the Luxembourg National Research Fund (Grant {\bf O24/18972745/GFRF})}
\blfootnote{Email: louis.gass(at)uni.lu,\quad giovanni.peccati(at)uni.lu}

\begin{abstract}
We study the \emph{rescaled nodal volume field} $\xi_R$ associated with a smooth, stationary Gaussian field on $[0,R]^d$, whose covariance satisfies adequate integrability conditions. Our main theorem shows that, as $R \to \infty$, the process $\xi_R$ converges in distribution, in an appropriate space of {c\`adl\`ag} mappings, to a standard Brownian sheet. The proof relies on a recent finite-dimensional CLT by Ancona {\it et al.} (2025), as well as on a multidimensional Kolmogorov--Chentsov criterion for tightness due to Bickel and Wichura (1971). The application of the latter requires new moment estimates that are of independent interest. Our results stand in sharp contrast with \emph{Berry's random wave model}, where the required integrability conditions fail and the question of tightness remains open.\jump

\noindent {\bf Keywords:} Brownian Sheet; Central Limit Theorem; Functional Convergence; Gaussian Fields; Nodal Volumes; Stationarity; Tightness. \\
{\bf AMS 2010:} 60G60, 60F05, 34L20, 33C10.

\end{abstract}

%


\section{Introduction}
\subsection{Overview and motivation}\label{ss:overview}

This paper shows that the finite-dimensional central limit theorems (CLTs) 
for the nodal measure $\nu$ of a smooth stationary Gaussian field 
$f$ on $\mathbb{R}^d$, recently established in \cite{Gas25, Anc24} via chaotic and cumulant 
methods, can be extended to a full functional limit theorem. 
Our main result (Theorem~\ref{thm1}) proves that the partition function 
associated with a suitably rescaled version of the nodal measure $\nu$ converges in 
distribution to a Brownian sheet in the Skorokhod space $\mathcal{D}[0,1]^d$ 
of c\`adl\`ag mappings on the unit cube; see \cite{neuhaus} and Section \ref{ss:notation} below. 
Our key technical tool is a tightness criterion due to Bickel and Wichura 
\cite{Bic71}, which we implement through novel moment estimates of 
independent interest (see Theorem~\ref{thm2}).

As argued for instance in \cite[Appendix A]{NoPeVidotto} or \cite[Appendix A.4]{Smutek} (see also \cite{IvanovUkra}), this mode of 
convergence is strictly stronger than the more common {\it convergence 
to white noise} results appearing in the literature, see e.g.\ 
\cite{Smutek, AnconaLetendreKostlan, NoPeVidotto} and the discussion contained in \cite[Section 1.5]{Wigman2010}. 
It also stands in sharp contrast with situations where the limiting 
Gaussian random field is non-separable --- so that finite-dimensional 
convergence cannot be lifted to a functional limit, cf.\ 
\cite{SodinBuckley, SodinTsirelson} and the discussion in 
\cite{NoPeVidotto} --- as well as with the case in which the underlying 
Gaussian field has non-integrable covariance. 
In the latter regime, the proof of tightness is out of reach both for 
moment-based approaches and for techniques relying on Wiener chaos 
expansions \cite{NoPeVidotto}. 

As an illustration of the power of functional convergence, 
Corollary~\ref{Cor1} combines our main result with the classical 
computations of \cite{YehWiener} to derive new limit theorems for certain 
{\it non-local} functionals of nodal measures on $\mathbb{R}^2$. These objects are akin to 
the notion of {\it overcrowding}, as investigated for instance in 
\cite{Priya, overcrowding1, overcrowding2}, and appear to be out of reach of purely 
finite-dimensional arguments.\jump

Our results form the latest instalment in a growing line of work that uses tools from Gaussian analysis to derive asymptotic laws for local geometric functionals of Gaussian fields. We refer the reader to \cite{Kleon, NouPecRossi, MaRoPeccati, DaNouPecRossi, DierickxNouPecRos, AnconaLetendreKostlan, Gas25, PeccatiVidotto} for several distinguished contributions in this direction, and to \cite{WigmanSurvey} for a recent survey of the area.

\smallskip

\subsection{Notation}\label{ss:notation}

From now on, all random elements are assumed to be defined on a common probability space $(\Omega, \mathcal{F}, \mathbb{P})$, with $\mathbb{E}$ to indicating expectation with respect to $\mathbb{P}$. The symbol $\stackrel{\rm law}{\longrightarrow}$ is used to denote convergence in distribution of random variables and random vectors, whereas $\mathcal{N}(\mu, \sigma^2)$ stands for the one-dimensional Gaussian distribution with mean $\mu$ and variance $\sigma^2$. For $Z\sim\mathcal{N}(0,1)$, we write
\begin{equation}\label{e:Phi}
\Phi(z) := \mathbb{P}(Z\leq z), \quad z\in \R.
\end{equation}

\smallskip 

Given an integer $d\geq 1$, we consider the unit (hyper)cube $[0,1]^d$ and define $\mathcal{D}[0,1]^d = \mathcal{D}_d$ to be the class of {\it multiparameter (generalized) c\`adl\`ag functions} on $[0,1]^d$, as defined in \cite[Def. 1.1, p. 1286]{neuhaus}. We endow the space $\mathcal{D}[0,1]^d$ with the $\sigma$-field generated by coordinate projections, as well as with the Skorohod topology described e.g. in \cite[p.1289]{neuhaus}. We recall that this topology is generated by a distance, making $\mathcal{D}[0,1]^d$ a separable metric space. We also define $C([0,1]^d,\R)$ to be the subset of $\mathcal{D}[0,1]^d$ composed of continuous mappings. We note that, if $h:[0,1]^d\to \R$ takes the form $h(t):=\nu([0,t_1]\times\cdots\times [0,t_d])$ for some finite measure $\nu$ on $[0,1]^d$, then an application of the dominated convergence theorem implies that $h \in \mathcal{D}[0,1]^d$. Given a collection $\{X_n,X\} $ of $\mathcal{D}[0,1]^d$-valued random elements, we say that $X_n$ {\it converges in distribution} to $X$ if
$
\lim_n \mathbb{E}[\varphi(X_n)] =\mathbb{E}[\varphi(X)], 
$
for all $\varphi : \mathcal{D}[0,1]^d \to \R$ continuous and bounded. 
\smallskip

Given a locally finite measure $\nu$ on $\R^d$ and a compactly supported bounded test function $\phi : \R^d \to \R$, we define
\begin{equation}\label{e:action}
\langle \nu, \phi\rangle := \int_{\R^d} \phi(x) \, \nu(dx).
\end{equation}

\smallskip

For every $d\geq 1$, we define the {\it $d$-parameter standard Brownian sheet} to be the centered Gaussian field
\begin{equation}\label{e:bsheet}
W = \{W(t) : t = (t_1,...,t_d)\in [0,1]^d\}
\end{equation}
such that
\begin{equation}\label{e:cbsheet}
\mathbb{E}[W(s) W(t)] = \prod_{\ell=1}^d \min(s_\ell , t_\ell),\quad s,t \in [0,1]^d.
\end{equation}
It is well known that the paths of $W$ are $\mathbb{P}$-a.s. continuous, so that $W$ can be regarded as a $\mathcal{D}[0,1]^d$-valued random element. Plainly, for $d=1$ the process $W$ is simply a standard Brownian motion on $[0,1]$. Fig. \ref{fig:BF_combined} displays a visualization of $W$ in the case $d=2$.

\smallskip

\begin{figure}[h!]
    \centering
    \begin{minipage}{0.42\textwidth}
        \centering
        \includegraphics[width=\textwidth]{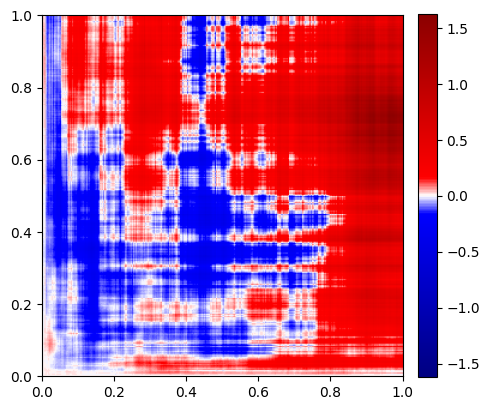}
    \end{minipage}
    \hfill
    \begin{minipage}{0.42\textwidth}
        \centering
        \includegraphics[width=\textwidth]{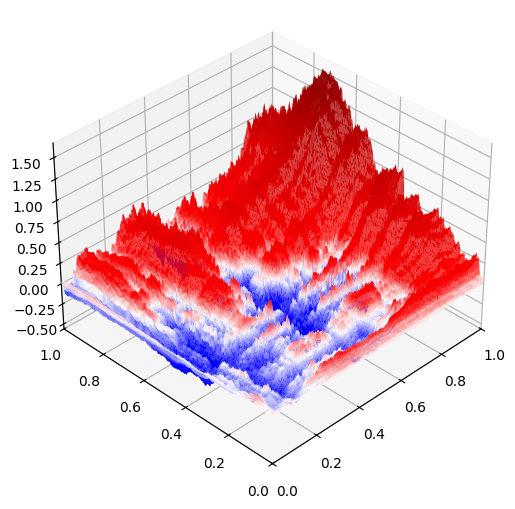}
    \end{minipage}

    \caption{\it Heatmap and 3D plot of a standard Brownian sheet $W$ on $[0,1]^2$.}
    \label{fig:BF_combined}
\end{figure}

\subsection{Setting}

Let $1\leq k\leq d$ be integers. In what follows, we consider a $\R^k$-valued centered stationary Gaussian field on $\R^d$, 
\begin{equation}\label{e:effe}
f = \big\{f(x) = (f^1(x),...,f^k(x)) : x\in \R^d\big\}.
\end{equation}

\smallskip

Given the random function in \eqref{e:effe}, we define the random nodal set
\[Z := \enstq{x\in \R^d}{f(x) = 0}.\]
It is easy to check that, under assumptions $(H1)$ or $(\widetilde{H}1)$ below, $Z$ is a.s.-$\mathbb{P}$ a smooth hypersurface of dimension $d-k$ in $\R^d$ (see e.g. \cite{Aza09, ATaylor}). We can then consider the associated {\it nodal measure} $\nu$ on $\R^d$ and its centered version $\nut$, defined for every measurable subset $A$ of $\R^d$ by
\begin{equation}\label{e:nueffe}
\nu(A) := \HH^{d-k}(Z\cap A)\quand \nut(A) := \nu(A)-\E[\nu(A)],
\end{equation}
where $\HH^{d-k}$ denotes the $d-k$ dimensional Hausdorff measure. We also associate with the random function $f$ the constant
\begin{equation}\label{e:gamma2}
\gamma_2 = \int_{\R^d} \rho_2(x)\dd x + \rho_1\one_{\{k=d\}}
\end{equation}
where $\rho_1$ and $\rho_2$ are the Kac densities defined in Equation \eqref{kac-density} below. Since the field is assumed to be invariant by translation, $\rho_1$ is a constant and $\rho_2$ can be seen as a function on $\R^d$.
We introduce the following set of assumptions:
\begin{itemize}
\item[$(H1)$] For any $\ell\in\{1,\ldots,8\}$, $m_1,\ldots,m_\ell$ such that $m_1+\ldots+m_\ell = 8$ and distinct points $x_1,\ldots,x_\ell\in\R^d$, the Gaussian vector $(\partial^{\alpha_i} f(x_i))_{1\leq i\leq \ell, \;\alpha_i\leq m_i}$ is non-degenerate. This assumption holds true as soon as the process $f$ is of class $\CC^8$ and the support of its spectral measure contains an open set.
\item[$(\widetilde{H}1)$] $f = \nabla h$, and $h$ is a stationary Gaussian process on $\R^d$ (so that $d=k$) and such that for any $\ell\in\{1,\ldots,9\}$, $m_1,\ldots,m_\ell$ such that $m_1+\ldots+m_\ell = 9$ and distinct points $x_1,\ldots,x_\ell\in\R^d$, the Gaussian vector $(\partial^{\alpha_i} f(x_i))_{1\leq i\leq \ell, \;\alpha_i\leq m_i}$ is non-degenerate. This assumption holds true as soon as the process $h$ is of class $\CC^9$ and the support of its spectral measure contains an open set.
\item[$(H2)$] There is a parameter $\varepsilon>0$ and a function $g\in L^2(\R_+)$ such that
\[\sup_{|u|,|v|\leq \varepsilon}\sup_{|\alpha + \beta|\leq 7} |\E[f^{(\alpha)}(x+u)f^{(\beta)}(y+v)]|\leq g(\|x-y\|).\]
This condition holds true as soon as the covariance function and its derivatives decay as a rate faster than $\|x\|^{-\alpha}$, with $\alpha>d/2$.
\item[$(H3)$] The constant $\gamma_2$ is positive. In the recent paper \cite{Gas25} (see Thm. 1.7) the positivity of the constant $\gamma_2$ is discussed: for instance, if $f$ is a collection of $k$ independent real random fields or if $f$ is a gradient field, then $\gamma_2$ is positive. In particular, $(H3)$ holds true if $k=1$, or if $(\widetilde{H}1)$ is satisfied.
\end{itemize}

\medskip

A prototypical example of a Gaussian field $f$ verifying assumptions $(H1)$, $(H2)$ and $(H3)$ for $k=1$ is the so-called {\it Bargmann-Fock field}, whose covariance is given by
\begin{equation}\label{e:BF}
\mathbb{E}[f(x)f(y)]= \exp\{ -\|x-y\|^2\}, \quad x,y\in \R^d,
\end{equation}
and which is also the field featured in the simulations of Figures \ref{fig:R} and \ref{fig:LL}. As discussed, e.g., in the survey \cite{Nalini}, such a field is the (large-degree) scaling limit of the {\it Kostlan polynomials} ensemble. A further example would be the so-called {\it large-band field}, whose spectral measure is the indicator of the unit ball in $\R^d$ and appears as the scaling limit of the Riemannian random wave model; see \cite{GassBer} for more details.

\jump

\subsection{Main results}
When $f$ satisfies assumption $(H1)$ (or $(\widetilde{H}1)$) and $(H2)$, 
it has been proved---by Wiener chaos methods in \cite[Thm 1.7]{Gas25}, 
and by cumulant methods in \cite[Thm. 1.9]{Anc24} under additional smoothness assumptions---that 
a suitable renormalization of $\nu(A)$ satisfies a central limit theorem as the set $A$ diverges to infinity, subject to a mild regularity condition on $A$. More precisely, for $R>0$ we introduce the normalized signed random measure $\nut_R$, whose action on a compactly supported bounded test function $\phi$ is defined by
\[
\langle \nut_R,\phi\rangle
:= \frac{\langle \nu,\phi\!\left(\frac{\cdot}{R}\right)\rangle - 
\E[\langle \nu,\phi\!\left(\frac{\cdot}{R}\right)\rangle]}
{\sqrt{\gamma_2}R^{d/2}},\numberthis\label{eq:04}
\]
where $\gamma_2$ is given in \eqref{e:gamma2} and the notation \eqref{e:action} and \eqref{e:nueffe} is in force. For every fixed compactly supported bounded test function $\phi$, the results in 
\cite[Thm. 1.7]{Gas25} yield the following central limit theorem under assumptions weaker than $(H1)$ or $(\widetilde{H1})$, $(H2)$ and $(H3)$
\[
\langle \nut_R,\phi\rangle 
\,\,{\overset{{\rm law}}{\longrightarrow}}\,\, 
\mathcal{N}(0,\|\phi\|_2),
\qquad R\to\infty.\numberthis\label{eq:01}
\]

\smallskip

As announced, our aim is to establish a functional counterpart of \eqref{eq:01}.  
For a vector $t\in\R_+^d$, write $[0,t]=[0,t_1]\times\cdots\times[0,t_d]$.  
The {\it nodal volume random field}
\[
\xi_R=\{\xi_R(t):t\in[0,1]^d\},
\]
corresponding to the partition function associated with the restriction of $\nut_R$ to $[0,1]^d$, is defined by
\[
\xi_R(t):=\nut_R([0,t])
= \langle \nut_R,\one_{[0,t]}\rangle,
\qquad t\in[0,1]^d. \numberthis\label{e:xi}
\]
By the Cramér--Wold device, the convergence in \eqref{eq:01} implies that the finite-dimensional 
distributions of $\xi_R$ converge to those of a standard Brownian sheet $W$ on $[0,1]^d$, 
as defined in \eqref{e:bsheet}--\eqref{e:cbsheet}. The following theorem---our main result---upgrades this finite-dimensional convergence to functional convergence.

\begin{theorem}
\label{thm1}
Let $f$ be a random process satisfying assumptions $(H1)$ (resp. $(\widetilde{H}1)$), $(H2)$ and $(H3)$. Then, as $R\to+\infty$, the random field $\xi_R$ defined in \eqref{e:xi} converges in distribution in $\Dd([0,1]^d)$ towards a $d$-dimensional standard Brownian sheet on $[0,1]^d$.
\end{theorem}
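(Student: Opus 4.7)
The proof follows the standard two-step paradigm for functional convergence in Skorokhod spaces: convergence of finite-dimensional distributions (fdd's) combined with tightness in $\Dd([0,1]^d)$.

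For the fdd's, fix $t_1,\ldots,t_n\in[0,1]^d$ and $a_1,\ldots,a_n\in\R$, and set $\phi:=\sum_{i=1}^n a_i\one_{[0,t_i]}$. This function is bounded and compactly supported, and by additivity of $\nut_R$ one has $\sum_i a_i\xi_R(t_i)=\langle\nut_R,\phi\rangle$. The CLT \eqref{eq:01} recalled from \cite[Thm.~1.7]{Gas25} thus gives $\langle\nut_R,\phi\rangle\stackrel{\mathrm{law}}{\longrightarrow}\mathcal{N}(0,\|\phi\|_2^2)$. A direct computation yields
\[
\|\phi\|_2^2=\sum_{i,j=1}^n a_ia_j\,\lambda\!\big([0,t_i]\cap[0,t_j]\big)=\sum_{i,j=1}^n a_ia_j\prod_{\ell=1}^d\min(t_{i,\ell},t_{j,\ell})=\Var\!\Big(\sum_{i=1}^n a_iW(t_i)\Big),
\]
so by the Cramér--Wold device, the fdd's of $\xi_R$ converge to those of the standard Brownian sheet $W$.

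For tightness, the plan is to invoke the multiparameter Bickel--Wichura criterion \cite{Bic71}, which reduces tightness in $\Dd([0,1]^d)$ to a uniform moment bound of the form
\[
\sup_R \E\!\left[|\Delta_B\xi_R|^{\gamma}\right]\leq C\,\lambda(B)^{\alpha}
\]
with $\gamma>0$ and $\alpha>1$, where $\Delta_B$ is the $d$-fold alternating increment of $\xi_R$ over a block $B\subset[0,1]^d$. Because $\xi_R(t)=\nut_R([0,t])$ is the distribution function of the additive signed measure $\nut_R$, inclusion--exclusion identifies $\Delta_B\xi_R=\nut_R(B)=(\nu(RB)-\E\nu(RB))/(\sqrt{\gamma_2}\,R^{d/2})$. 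After rescaling and using stationarity, the required bound becomes a uniform estimate $\E[|\nu(A)-\E\nu(A)|^{\gamma}]\leq C\,|A|^{\alpha}$ over axis-aligned boxes $A\subset\R^d$ of arbitrary size and aspect ratio; this is precisely the moment estimate announced as Theorem~\ref{thm2}.

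The main obstacle is thus Theorem~\ref{thm2} itself. For macroscopic boxes, the Gaussian-CLT heuristic predicts $\E[(\nu(A)-\E\nu(A))^{2p}]\asymp(\Var\nu(A))^p\asymp |A|^p$, plausibly accessible through the covariance integrability $(H2)$ and the cumulant/Wiener-chaos techniques developed in \cite{Gas25,Anc24}. The delicate point is to show that the bound holds \emph{uniformly} across all scales, including the thin-block regime that governs small-time fluctuations of the increment process. A natural route is to expand $\nu-\E\nu$ in Wiener chaoses as in \cite{Gas25}, bound each chaotic projection in $L^p$ via its explicit covariance kernel using $(H2)$ together with the non-degeneracy hypothesis $(H1)$ (or $(\widetilde{H}1)$) to control the Kac--Rice densities, and then sum the contributions with the geometry of $A$. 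Once Theorem~\ref{thm2} is in place, the Bickel--Wichura criterion delivers tightness, which combined with the fdd convergence above identifies the limit as the Brownian sheet and concludes the proof of Theorem~\ref{thm1}.
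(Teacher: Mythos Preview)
Your finite-dimensional argument is correct and essentially identical to the paper's: apply \eqref{eq:01} to $\phi=\sum_i a_i\one_{[0,t_i]}$ and invoke Cramér--Wold to identify the fdd limit as the Brownian sheet.

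The tightness step, however, contains a genuine gap. You state the Bickel--Wichura criterion as a \emph{single-block} bound $\E[|\nut_R(B)|^{\gamma}]\le C\lambda(B)^{\alpha}$ with $\alpha>1$, and you assert that this is ``precisely the moment estimate announced as Theorem~\ref{thm2}''. Neither is accurate. The criterion of \cite{Bic71} invoked in the paper is a \emph{two-block} condition on pairs of adjacent rectangles, and Theorem~\ref{thm2} accordingly establishes
\[
\E\big[|\nut_R(A)|^{3/2}|\nut_R(B)|^{3/2}\big]\le C\,\Vol(A\cup B)^{1+\alpha}
\]
for adjacent $A,B\subset[0,1]^d$. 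This distinction is not cosmetic: in the case $k=d$, where $\nu$ is a counting measure and $\xi_R$ has jumps, the single-block bound you propose actually \emph{fails}. For a block $B$ with $\lambda(B)\to 0$ and $R$ fixed, $\nu(RB)$ is approximately Bernoulli with parameter $\rho_1 R^d\lambda(B)$, so $\E[|\nut_R(B)|^{3}]\asymp \lambda(B)R^{-d/2}$, which cannot be dominated by $C\lambda(B)^{1+\epsilon}$ uniformly as $\lambda(B)\to 0$. The two-block product gains an extra small factor from the approximate independence of the events $\{\nu(RA)\ge 1\}$ and $\{\nu(RB)\ge 1\}$; the paper extracts it by Cauchy--Schwarz interpolation between a cross second-moment estimate (Proposition~\ref{prop1}) and a fourth-moment estimate (Proposition~\ref{prop2}), both derived from Kac--Rice cumulant densities \eqref{e:Fp} rather than the Wiener-chaos expansion you sketch. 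Once the correct form of Theorem~\ref{thm2} is in hand, your concluding sentence is right: Bickel--Wichura plus the fdd convergence yields Theorem~\ref{thm1}.
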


An illustration of Theorem \ref{thm1} in the case where $f$ is the Bargmann-Fock field defined in \eqref{e:BF}  is provided in Figure \ref{fig:R}. 

\begin{figure}[H]
\centering

\begin{subfigure}{0.32\textwidth}
    \centering
    \includegraphics[width=\linewidth]{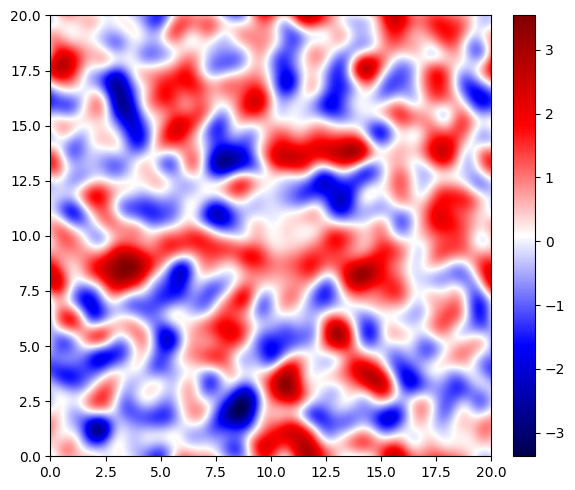}
    \caption*{(a)}
\end{subfigure}
\hfill
\begin{subfigure}{0.32\textwidth}
    \centering
    \includegraphics[width=\linewidth]{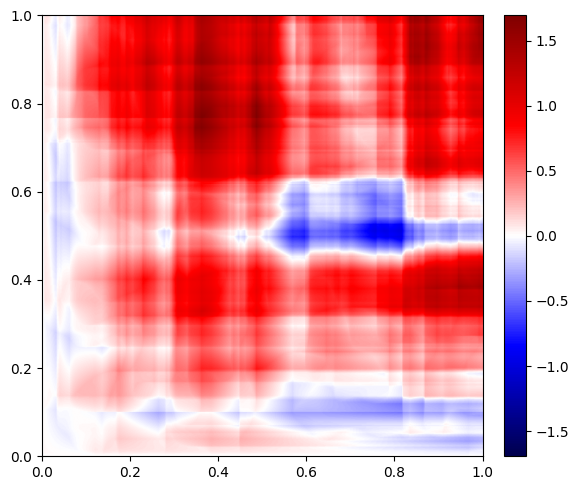}
    \caption*{(b)}
\end{subfigure}
\hfill
\begin{subfigure}{0.32\textwidth}
    \centering
    \includegraphics[width=\linewidth]{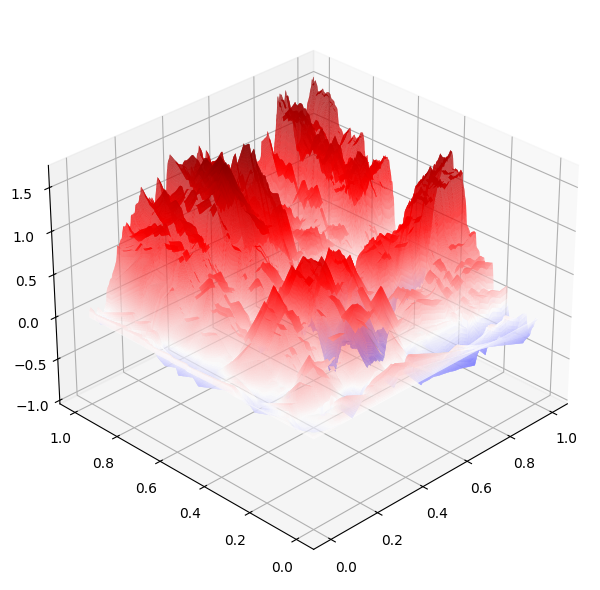}
    \caption*{(c)}
\end{subfigure}

\vspace{0.3cm}

\begin{subfigure}{0.32\textwidth}
    \centering
    \includegraphics[width=\linewidth]{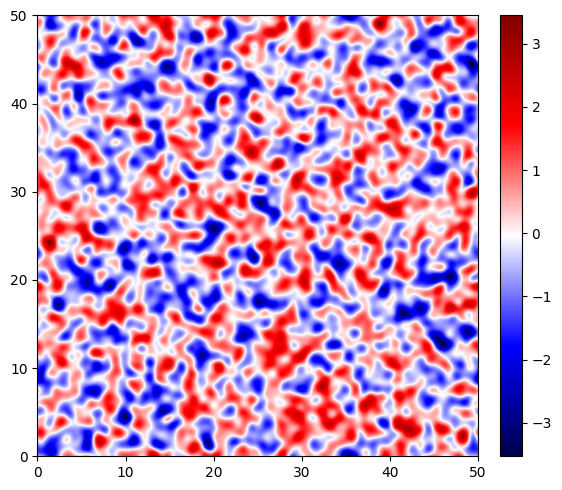}
    \caption*{(d)}
\end{subfigure}
\hfill
\begin{subfigure}{0.32\textwidth}
    \centering
    \includegraphics[width=\linewidth]{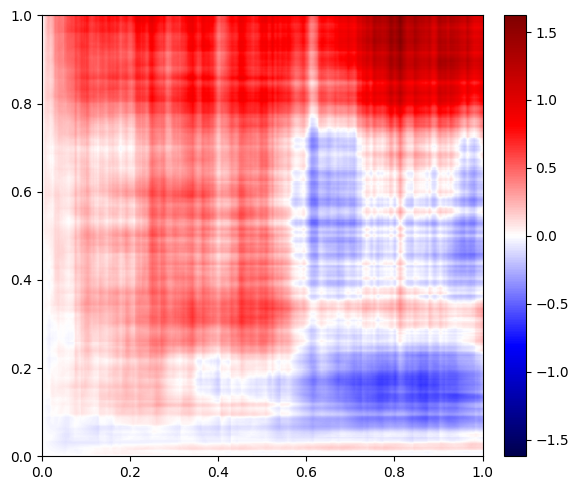}
    \caption*{(e)}
\end{subfigure}
\hfill
\begin{subfigure}{0.32\textwidth}
    \centering
    \includegraphics[width=\linewidth]{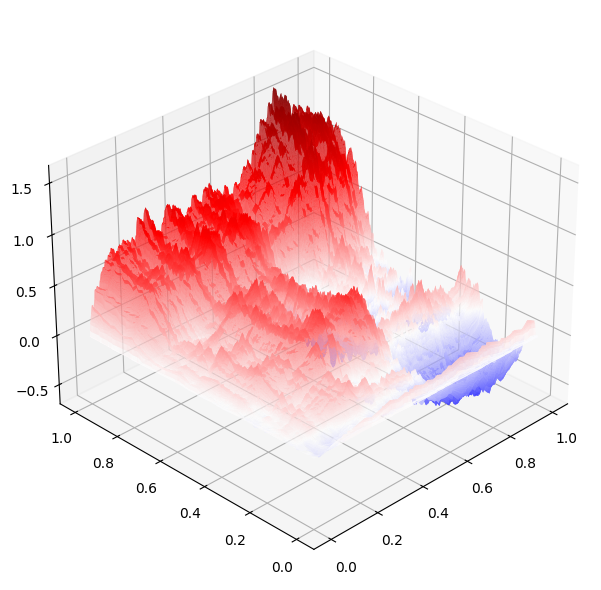}
    \caption*{(f)}
\end{subfigure}

\vspace{0.3cm}

\begin{subfigure}{0.32\textwidth}
    \centering
    \includegraphics[width=\linewidth]{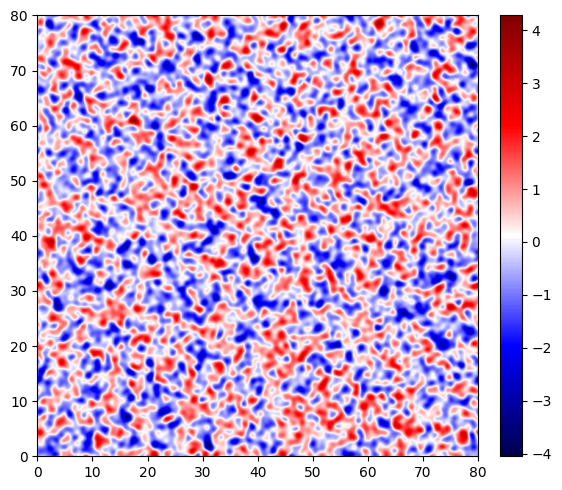}
    \caption*{(g)}
\end{subfigure}
\hfill
\begin{subfigure}{0.32\textwidth}
    \centering
    \includegraphics[width=\linewidth]{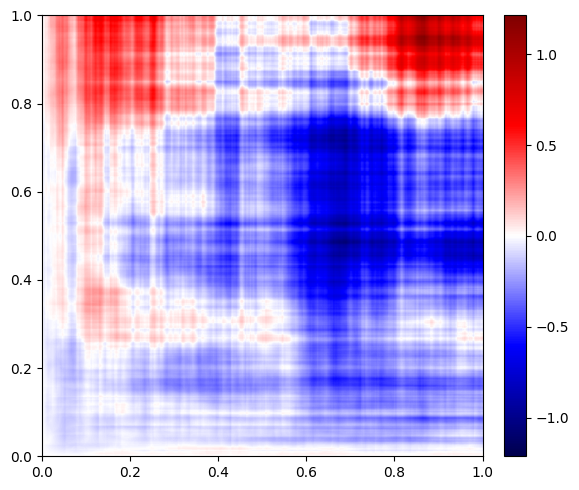}
    \caption*{(h)}
\end{subfigure}
\hfill
\begin{subfigure}{0.32\textwidth}
    \centering
    \includegraphics[width=\linewidth]{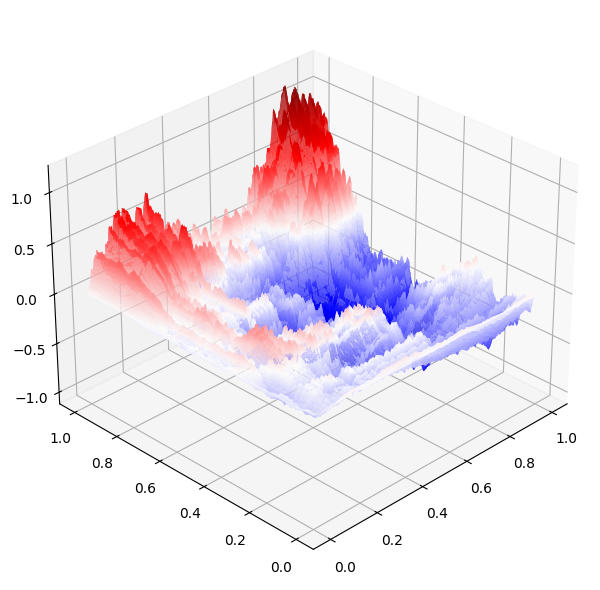}
    \caption*{(i)}
\end{subfigure}

\vspace{0.3cm}

\caption{\it Simulations of the Bargmann--Fock field on the square $[0,R]^2$, and of the associated nodal-length field $\xi_R$ on $[0,1]^2$. From left to right, the columns display the Bargmann--Fock field {\rm ((a), (d), (g))}, the nodal-length field $\xi_R$ shown as a heatmap {\rm ((b), (e), (h))}, and a three-dimensional plot of $\xi_R$ {\rm ((c), (f), (i))}. From top to bottom, the rows correspond to $R=20$, $R=50$, and $R=80$.}
\label{fig:R}
\end{figure}

\begin{remark}
Assumptions $(H1)$, $(\widetilde{H}1)$ and $(H2)$ can be relaxed by considering a sequence $\{f_n: n\geq 0\}$ converging in distribution to a stationary field $f$, as discussed in \cite{Anc24}. Moreover, when $k<d$, one can show that the mapping $t\mapsto \xi_R(t)$ is $\mathbb{P}$- a.s. continuous on $[0,1]^d$; see, e.g., \cite{Pec24}. Since the space $C([0,1]^d)$ endowed with the uniform topology is closed in the Skorokhod space $\Dd[0,1]^d$, one deduces the functional convergence established above also holds in $C([0,1]^d)$.
\end{remark}

\medskip 

Since $W$ is continuous a.s.-$\mathbb{P}$, one immediately deduces from 
Theorem \ref{thm1} that, for any compact set $L\subseteq[0,1]^d$,
\begin{equation}\label{e:maxlaw}
\sup_{t\in L}\, \xi_R(t) \stackrel{\rm law}{\longrightarrow} \sup_{t\in L}\, W(t).
\end{equation}
When $d=1$, it is well-known that $\sup_{t\in [0,1]} W(t) \stackrel{\rm law}{=}|G|$, where $G$ is a standard Gaussian random variable (see e.g. \cite[Theorem 2.21]{legall}). When $d\geq 2$, the exact distribution of the random variable 
$\sup_{t\in L} W(t)$ is known only in a handful of cases; for $d=2$, 
several of these are discussed in the classical reference \cite{YehWiener}, 
whose content is recalled below.

\begin{wrapfigure}[8]{r}{0.44\textwidth}
  \vspace{-\baselineskip}
  \centering
  \includegraphics[width=0.48\linewidth]{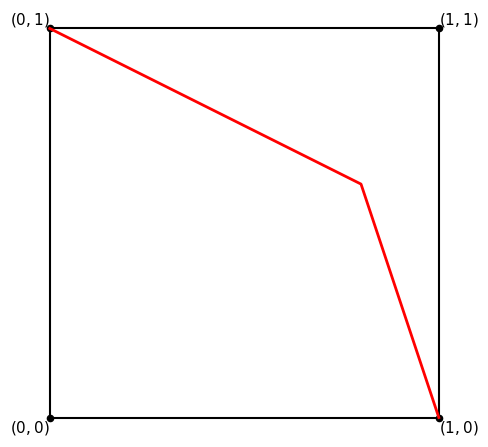}%
  \hfill
  \includegraphics[width=0.48\linewidth]{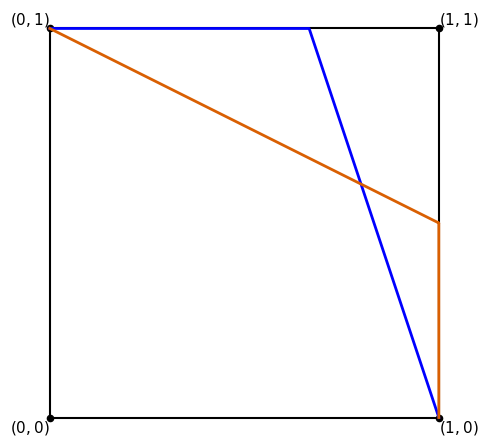}
  \caption{\it The set $L(a,b)$ for $a=2, \, b=3$ (red curve), 
  $a=\infty,\, b=3$ (blue curve), and $a=2,\, b=\infty$ (orange curve). The breaking point in each curve has coordinates $(k, 1-k/a)$.}
  \label{fig:L}
\end{wrapfigure}

\medskip

For every $a,b\in (1,+\infty]$, we introduce the notation
\[
k := \frac{a(b-1)}{ab-1}
\qquad\text{and}\qquad
c:= \frac{a(b-1)}{b(a-1)},
\]
with the convention
\[
(k,c)
=
\begin{cases}
\left(\dfrac{b-1}{b},\, \dfrac{b-1}{b}\right), & a = +\infty,\; b \text{ finite}, \\[6pt]
\left(1,\, \dfrac{a}{a-1}\right),              & b = +\infty,\; a \text{ finite}, \\[6pt]
(1,1),                                         & a = b = +\infty.
\end{cases}
\]
For $a,b$ as above, we define the polygonal curve $L(a,b)$ by
\[
L(a,b)
:= \{(x,y)\in[0,1]^2 : y= 1-x/a \text{ for } x\le k,\ 
                         y = b-bx \text{ for } x\ge k\},
\]
as illustrated in Figure \ref{fig:L}. We observe that 
$L(+\infty,+\infty)$ coincides with union of the upper and right edges of the unit square.
Then, in \cite[Theorem 2]{YehWiener} it is shown that, for every $\lambda\ge 0$,
\begin{eqnarray}\label{e:repf}
 &&\mathbb{P}\!\left[\sup_{t\in L(a,b)}\, W(t)\leq \lambda\right] \\ \notag
 &&= \Phi\!\big(\lambda(a+c)/(ac^{1/2})\big)
    - e^{-2\lambda^2/a}\,\Phi\!\big(\lambda(c-a)/(ac^{1/2})\big) \\ \notag 
 &&\quad -e^{-2\lambda^2/b}\,\Phi\!\big(\lambda(1-bc)/(bc^{1/2})\big)
    +e^{-2\lambda^2(a^{-1}+b^{-1}-2)}\,
     \Phi\!\big(\lambda c^{-1/2}(b^{-1}-c-2 )\big)
:= H(a,b,\lambda),
\end{eqnarray}
where we have adopted the notation \eqref{e:Phi}.
From Theorem \ref{thm1} one immediately deduces the following:

\smallskip
\par\noindent
\begin{minipage}{\textwidth}
\begin{corollary}\label{Cor1}
Consider the case $d=2$. Then, under the assumptions of Theorem \ref{thm1},
for every $a,b>1$ as above,
\[
\mathbb{P}\!\left[\sup_{t\in L(a,b)}\, \xi_R(t)\leq \lambda\right] 
\;\longrightarrow\; H(a,b,\lambda),
\qquad \lambda\ge 0.
\]
When $a=b=+\infty$, this implies in particular that
\[
\mathbb{P}\!\left[\sup_{t\in \partial[0,1]^2}\, \xi_R(t)\leq \lambda\right]
\;\longrightarrow\; H(+\infty,+\infty,\lambda)
= 1-3\Phi(-\lambda)+e^{4\lambda^2}\Phi(-3\lambda).
\]
\end{corollary}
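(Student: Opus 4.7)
The plan is to combine the functional convergence furnished by Theorem~\ref{thm1} with the classical distributional identity \eqref{e:repf} from \cite{YehWiener}, via a standard continuous mapping argument. First, Theorem~\ref{thm1} directly gives $\xi_R \stackrel{\mathrm{law}}{\longrightarrow} W$ in $\Dd([0,1]^2)$, where $W$ is a $2$-parameter standard Brownian sheet. For any compact set $K\subseteq [0,1]^2$, the supremum functional $\Psi_K : g\mapsto \sup_{t\in K} g(t)$ is continuous at every point $g\in C([0,1]^2)\subseteq \Dd([0,1]^2)$: indeed, as in the one-dimensional case, convergence in the Skorokhod topology on $\Dd([0,1]^d)$ towards a continuous limit is equivalent to uniform convergence (a fact specific to multi-parameter Skorokhod spaces discussed in \cite{neuhaus}), while $\Psi_K$ is $1$-Lipschitz with respect to the uniform norm. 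Since $W$ has continuous sample paths $\PP$-almost surely, the continuous mapping theorem yields
\[
\sup_{t\in L(a,b)} \xi_R(t) \;\stackrel{\mathrm{law}}{\longrightarrow}\; \sup_{t\in L(a,b)} W(t),\qquad R\to \infty,
\]
and plugging into \eqref{e:repf} produces the first convergence of the corollary.

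For the second statement, I first observe that $\xi_R(t) = 0$ whenever $t$ has a vanishing coordinate. Indeed, if $t=(0,t_2)$ or $t=(t_1,0)$, then $[0,t]$ is contained in a straight line, so $\HH^{d-k}(Z\cap[0,t])=0$ almost surely (by the transversality built into $(H1)$ or $(\widetilde{H}1)$), and the centred field $\nut_R$ inherits this vanishing. The analogous property for $W$ is immediate from \eqref{e:cbsheet}. Consequently, for every $\lambda\ge 0$,
\[
\PP\!\left[\sup_{t\in \partial [0,1]^2}\xi_R(t)\leq \lambda\right]
= \PP\!\left[\sup_{t\in L(+\infty,+\infty)}\xi_R(t)\leq \lambda\right],
\]
with the analogous equality holding for $W$. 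To conclude, I pass to the limit $a,b\to+\infty$ in $H(a,b,\lambda)$. Using $k\to 1$ and $c\to 1$, the four terms in \eqref{e:repf} respectively tend to $\Phi(\lambda)$, $-\Phi(-\lambda)$, $-\Phi(-\lambda)$, and $e^{4\lambda^2}\Phi(-3\lambda)$; invoking the identity $\Phi(\lambda)=1-\Phi(-\lambda)$, the sum simplifies to the announced expression $1 - 3\Phi(-\lambda) + e^{4\lambda^2}\Phi(-3\lambda)$.

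The only delicate point is the continuity statement for $\Psi_K$: the functional is certainly not continuous on the whole of $\Dd([0,1]^2)$, but because the law of the limiting Brownian sheet is concentrated on $C([0,1]^2)$ the Skorokhod-to-uniform reduction mentioned above is exactly what is needed. Everything else is routine substitution into \eqref{e:repf} and the almost-sure vanishing of $\xi_R$ and $W$ along the axes.
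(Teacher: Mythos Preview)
Your argument is correct and coincides with the paper's own (very brief) reasoning: the paper simply records the implication \eqref{e:maxlaw} from the a.s.\ continuity of $W$ together with Theorem~\ref{thm1}, and then invokes the explicit formula \eqref{e:repf} from \cite{YehWiener}. Your proposal fills in the details the paper leaves implicit --- the Skorokhod-to-uniform reduction justifying the continuous mapping step, the vanishing of $\xi_R$ and $W$ along the coordinate axes so that $\partial[0,1]^2$ may be replaced by $L(+\infty,+\infty)$, and the explicit evaluation of $H(+\infty,+\infty,\lambda)$ --- but the strategy is identical.
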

\end{minipage}

\medskip

As already noted in Section \ref{ss:overview}, any estimate on the quantity
\[
1-  \mathbb{P}\!\left[\sup_{t\in L}\, \xi_R(t)\leq \lambda\right]
= \mathbb{P}\!\left[\sup_{t\in L}\, \xi_R(t)> \lambda\right]
= \mathbb{P}\!\left[\sup_{t\in L}\, \{\nu_R([0,t]) 
       - \mathbb{E}[\nu_R([0,t])] \} > \gamma_2^{1/2} R \lambda\right],
\]
can be interpreted as the assessment of a {\it non-local overcrowding event} in the spirit of 
\cite{Priya, overcrowding1, overcrowding2}, as it quantifies the probability 
that there exists $t\in L$ such that $\nu_R([0,t])$ exceeds its expectation 
by a margin of order~$R$.  
In particular, as $\lambda\to\infty$,
\[
\mathbb{P}\!\left[\sup_{t\in \partial[0,1]^2} W(t)> \lambda\right]
= 1 - H(+\infty,+\infty,\lambda)
\lesssim \frac{e^{-\lambda^2/2}}{\lambda}.
\]

\begin{wrapfigure}[11]{r}{0.44\textwidth}
  \vspace{-0.3\baselineskip}
  \centering
  \includegraphics[width=0.48\linewidth]{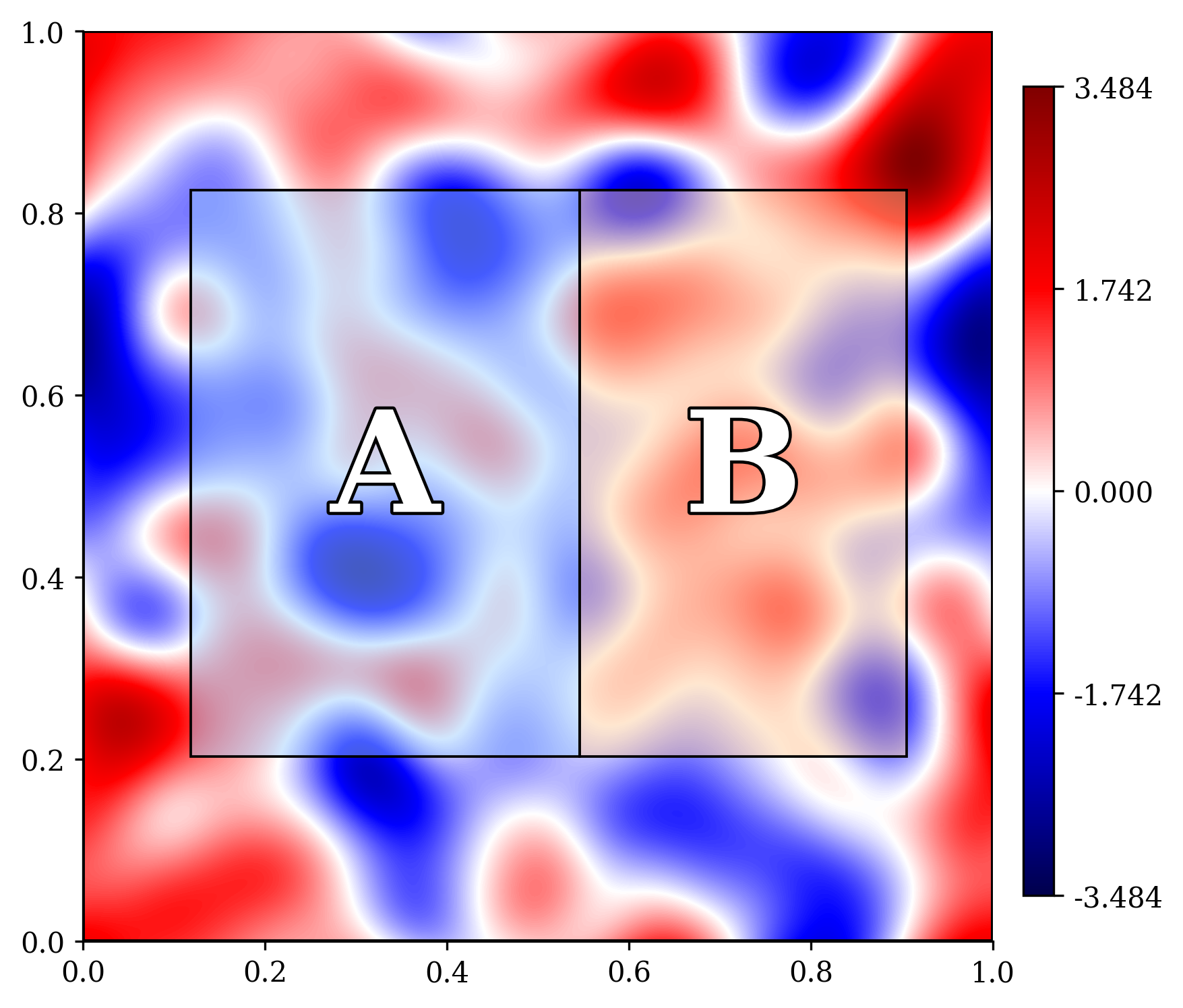}%
  \hfill
  \includegraphics[width=0.48\linewidth]{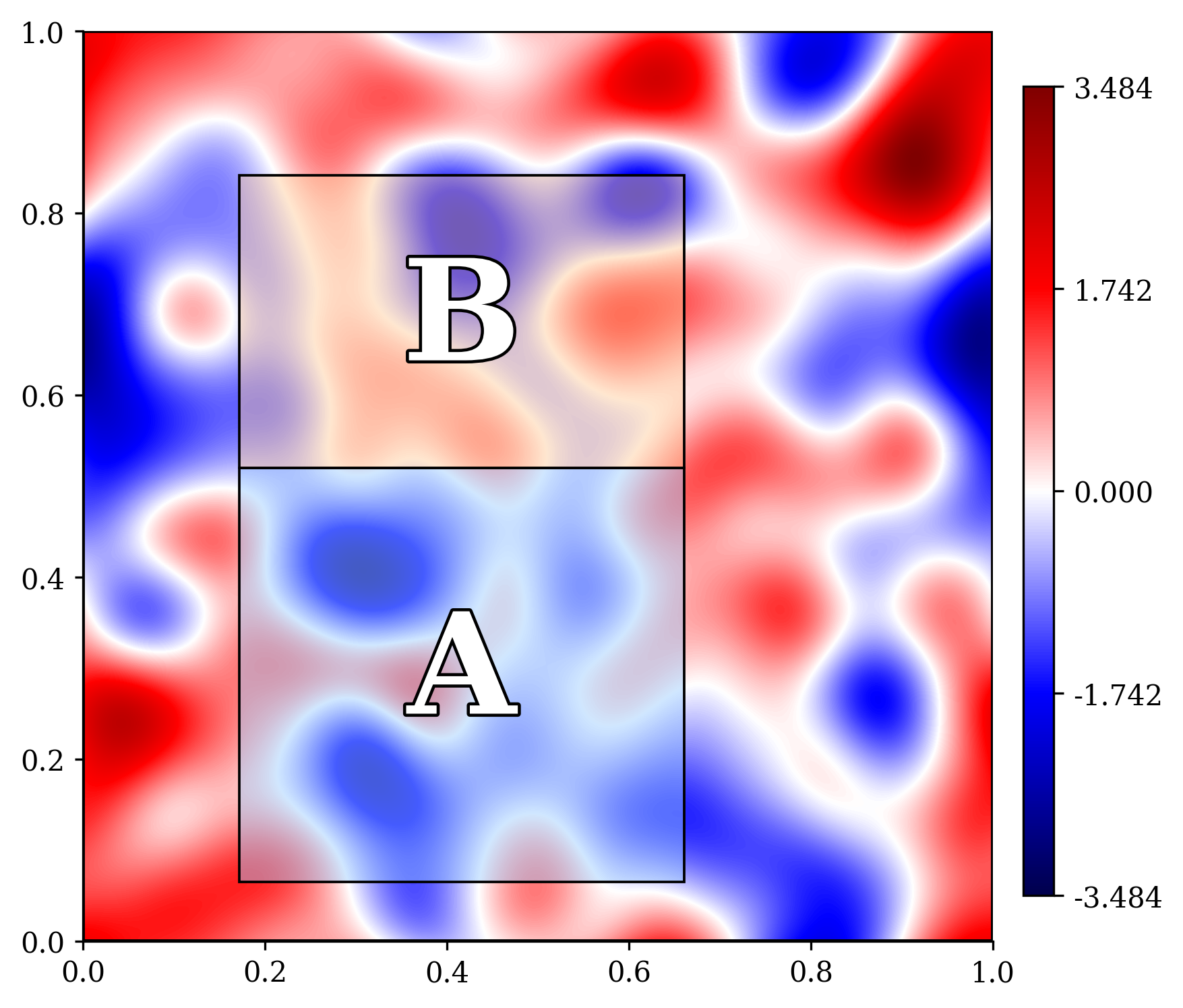}
  \caption{\it Two pairs of adjacent rectangles $(A,B)$ contained in $[0,1]^2$. In the background, a realization of $x\mapsto f(Rx)$, where $f$ is the Bargmann-Fock field and $R=10$.}
  \label{fig:LL}
\end{wrapfigure}

Since the convergence of the finite-dimensional distributions of the fields $\xi_R$, $R>0$, follows from \eqref{eq:01}, Prokhorov’s theorem (see, e.g., \cite[Theorem~15.1]{billingsley}) implies that, in order to prove Theorem~\ref{thm1}, it suffices to establish tightness for the family of laws associated with the class $\{\xi_R : R>0\}$. Such tightness can be deduced from the multidimensional Kolmogorov--Chentsov criterion proved in \cite{Bic71}.  Recall that, according to the nomenclature of \cite{Bic71}, two rectangles $A,B\subset[0,1]^d$ are \emph{adjacent} if their interiors are disjoint and their union is again a rectangle (see Figure \ref{fig:LL}). Also, for every rectangle $A$, the quantity $\nu_R(A)$ coincides precisely with the \emph{increment} of the process $\xi_R$ over $A$ in the sense of \cite[p.~3]{Bic71}.
 We fix a positive exponent $\alpha$ such that 
\[\alpha < \left\lbrace\begin{array}{llll} 
\frac{1}{2}\vphantom{\int_{\int_{\int}}} \qquad\quad\;\text{if}\;d=1,\\
\frac{1}{d^2-d+1} \quad\text{if}\;k=d>1\\
\frac{d-k}{2d} \qquad\;\;\text{if}\;k<d\vphantom{\int^{\int^\int}}.
\end{array}\right.\]

The following statement is the main technical achievement of our work.
\begin{theorem}
\label{thm2}
Let $f$ be a ranom field on $\R^d$ satisfying assumptions $(H1)$ (resp. $(\widetilde{H}1)$), $(H2)$ and $(H3)$. Then, there exists a constant $C$ depending only on the process $f$ such that, for $R\geq 1$ and $A,B$ two adjacent rectangles in $[0,1]^d$,
\[\E[|\nut_R(A)|^{3/2}|\nut_R(B)|^{3/2}]\leq C \Vol(A\cup B)^{1+\alpha}.\]
In particular, the laws associated with the class $\{ \xi_R: R>0\}$ are tight in $\Dd[0,1]^d$.
\end{theorem}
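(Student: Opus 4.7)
The plan is to prove the moment inequality by a two-step reduction, and then invoke the Bickel--Wichura multidimensional Kolmogorov--Chentsov criterion \cite{Bic71} to conclude tightness. The finite-dimensional convergence is already in hand via Cramér--Wold and \eqref{eq:01}, so only tightness remains to be established.

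First, I would apply Cauchy--Schwarz to decouple $A$ and $B$:
\[\E[|\nut_R(A)|^{3/2}|\nut_R(B)|^{3/2}] \leq \sqrt{\E[|\nut_R(A)|^3]\,\E[|\nut_R(B)|^3]},\]
reducing the problem to a single-rectangle bound on the third absolute moment. A further Hölder step, applied as $|x|^3 = x^2|x|$, gives
\[\E[|\nut_R(E)|^3] \leq \sqrt{\E[\nut_R(E)^2]\,\E[\nut_R(E)^4]},\]
where the second moment is controlled by $C\,\Vol(E)$, as implicit in the variance asymptotics of \cite{Gas25} underlying \eqref{eq:01}. Combining these and using AM-GM $\Vol(A)\Vol(B)\leq \Vol(A\cup B)^2/4$ together with $\Vol(A\cup B)\leq 1$, a fourth-moment bound of the form $\E[\nut_R(E)^4]\leq C\,\Vol(E)^{2(1-\delta)}$ translates directly into the desired inequality with $\alpha=\tfrac{1}{2}-\delta$.

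The heart of the argument is therefore the fourth moment. The approach is to expand $\E[\nu(RE)^4]$ via the Kac--Rice formula as an integral over $(RE)^4$ of the four-point joint intensity, and then organize the centered moment $\E[\nut_R(E)^4]$ through the cumulant (Wick) expansion. Under assumption $(H2)$, the off-diagonal contributions decay integrably, yielding a Gaussian bulk of order $\Vol(E)^2$. The $(d,k)$-dependent correction arises from the near-diagonal regions, where the Jacobian appearing in the Kac--Rice density degenerates as evaluation points coalesce; the non-degeneracy hypothesis $(H1)$ (or $(\widetilde{H}1)$) is essential to quantify this blow-up. The three regimes $d=1$, $k<d$, and $k=d>1$ yield distinct estimates, reflecting the different geometries of the nodal set (points, submanifold, or endpoint case), and produce the three threshold values of $\alpha$ stated in the theorem.

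The main obstacle is precisely this fourth-moment estimate: extracting the correct $(d,k)$-dependent exponent requires a delicate multi-point Kac--Rice analysis and, in particular, a careful understanding of how the joint intensity degenerates on near-diagonal configurations of the evaluation points. The chaos/cumulant framework of \cite{Gas25, Anc24} provides the natural setting, but the estimates needed here go beyond the one-point CLT: what matters is the fine behavior of the increments over possibly thin rectangles, which is exactly what prevents the naive Gaussian scaling from surviving in dimensions $d\geq 2$. Once the moment inequality is established, Bickel--Wichura's criterion, applied with exponents $\beta_1=\beta_2=3/2$ and summability exponent $\alpha>0$, yields tightness of $\{\xi_R:R>0\}$ in $\Dd[0,1]^d$ at once.
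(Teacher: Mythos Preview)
Your plan has a genuine gap, and it lies precisely in the way you apply Cauchy--Schwarz. Decoupling $A$ and $B$ via
\[
\E[|\nut_R(A)|^{3/2}|\nut_R(B)|^{3/2}]\le \sqrt{\E[|\nut_R(A)|^3]\,\E[|\nut_R(B)|^3]}
\]
reduces the problem to single-rectangle moments, but for $k=d$ those cannot deliver an exponent strictly larger than~$1$. Indeed, when $k=d$ the nodal measure $\nu$ is a counting measure, and for a small box $E=[0,\varepsilon]^d$ one has $\E[\nut(E)^2]\sim\rho_1\Vol(E)$ and $\E[\nut(E)^4]\sim\rho_1\Vol(E)$ (both dominated by the ``self-interaction'' diagonal term $\int_E\rho_1$ appearing in Lemma~\ref{lem2}). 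Hence $\E[|\nut_R(E)|^3]\lesssim \Vol(E)$ at best, and after AM--GM you end up with $\E[|\nut_R(A)|^{3/2}|\nut_R(B)|^{3/2}]\lesssim \Vol(A\cup B)$, i.e.\ $\alpha=0$, which is exactly below the Bickel--Wichura threshold. Your claimed bound $\E[\nut_R(E)^4]\le C\Vol(E)^{2(1-\delta)}$ with $\delta<1/2$ is simply false in this regime.

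The paper sidesteps this by applying Cauchy--Schwarz \emph{without} decoupling:
\[
\E[|\nut(A)|^{3/2}|\nut(B)|^{3/2}]\le \sqrt{\E[|\nut(A)\nut(B)|]}\;\sqrt{\E[\nut(A)^2\nut(B)^2]}.
\]
The point is that the \emph{mixed} first moment $\E[|\nut(A)\nut(B)|]$, for adjacent and hence disjoint $A,B$, avoids the diagonal contribution $\int\rho_1$ and can be bounded (Proposition~\ref{prop1}) by
\[
C\Bigl(\prod_j t_j\Bigr)\min\Bigl(1,\ \bigl(\textstyle\prod_{j\le i}t_j\bigr)^{\eta}\bigl(\textstyle\prod_{j>i}t_j\bigr)\Bigr),
\]
where $t_j$ are the side lengths of $A\cup B$ and $i$ separates the short from the long sides. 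It is this extra factor $(\prod_{j\le i}t_j)^\eta$---unavailable for the single-rectangle variance when $k=d$---that, after the interpolation $\min(1,a)\le a^\varepsilon$ and a careful balancing of the $R$-powers against $\varepsilon$, produces the required exponent $1+\alpha$. For $k<d$ your scheme could in principle be repaired using the sharper variance bound of Remark~\ref{rem1}, but as written your second-moment input $C\Vol(E)$ is too weak even there.
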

The proof Theorem \ref{thm2} relies on several fine estimates of the moments of order $2$ and $4$ of the random measure $\nut_R$, stated below in Proposition \ref{prop1} and Proposition \ref{prop2}. The main difficulty resides in having moment estimates for the nodal measure that are valid for rectangles with arbitrary size and flatness. Indeed, according to \cite{Gas23}, although the moments of these quantities are finite, no sufficiently strong estimates are available, allowing one to directly apply the results in \cite{Iva82} (except in the case $k=d=1$).
\section{Proof}
\subsection{Kac--Rice formula and cumulants}
We first recall the Kac--Rice formula, in a version whose proof can be found in \cite[Thm. 6.2]{Aza09}. There will be a distinction to be made between the case $k<d$ and $k=d$. Indeed, in the latter case, the Kac--Rice formula yields the factorial moments for the number of zeros, rather than the standard moments. This will change slightly the underlying combinatorial arguments, while keeping the core strategy behind the proof unchanged. In the following, $p$ is a positive integer and $\Pt_p$ stands for the set of all partitions of the set $\{1,\ldots,p\}$. Let $\Delta$ be \textit{large diagonal} in $(\R^d)^p$, defined by
\[\Delta = \enstq{(x_1,\ldots,x_p)\in (\R^d)^p}{\exists i\neq j\;\text{such that}\; x_i=x_j}\]
We define the {\it Kac density} $\rho_p$ on $(\R^d)^p\setminus\Delta$, as
\[\rho_p(x_1,\ldots,x_p)= \dfrac{\E\left[\prod\limits_{a\in A}\sqrt{\det[(\nabla_{x_a}f)( \nabla_{x_a}f)^T]}\;\middle|\; f(x_a)=0, \, a\in A\right]}{\sqrt{\det\left[2\pi\Var((f(x_a))_{a\in A})\right]\vphantom{\int^a}}}.\numberthis\label{kac-density}\]
In the following, we fix two adjacent rectangles $A,B$ in $\R_+^d$. We can assume by stationarity that their union is the rectangle $[0,t]$, with $t = \R_+^d$. We can also assume without loss of generality that the coordinates of $t$ are ordered. Let $i\in \{0,\ldots,d\}$ be the integer such that
\[t_1\leq \ldots \leq t_i< 1\quand 1\leq t_{i+1}\leq\ldots\leq t_d.\]
Heuristically, one must think about $t_1,\ldots t_i$ as arbitrarily small coordinates, and $t_{i+1},\ldots t_d$ as arbitrarily large coordinates. The following statement combines the already mentioned Kac–Rice formula with the main results of \cite{Gas23,Anc23}, and shows that, under the assumptions of Theorem~\ref{thm1}, the relevant moments are finite.
\begin{theorem}[Kac--Rice]
\label{thm3} Let $f$ be a random field on $\R^d$ satisfying assumptions  $(H1)$ (resp. $(\widetilde{H}1)$). Let $p$ be a positive integer and $A,B$ be two measurable subsets of $\R^d$ with disjoint interiors. If $k<d$ then
\[E[\nu(A)^p\nu(B)^p] = \int_{A^p\times B^p}\rho_{2p}(\x)\dd \x <+\infty,\]
and if $d=k$ then
\[E[\nu(A)^p\nu(B)^p] = \sum_{\I,\J\in \Pt_p}\int_{A^{\I}}\int_{B^{\J}}\rho_{|\I|+|\J|}(\x_{\I},\y_{\J})\dd \x_{\I}\dd \y_{\J}<+\infty.\]
\end{theorem}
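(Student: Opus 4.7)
The plan is to combine the classical Kac--Rice formula \cite[Thm.~6.2]{Aza09} with the integrability bounds of \cite{Gas23,Anc23}. The argument splits naturally according to whether the nodal set is positive-dimensional ($k<d$) or discrete ($k=d$); in both cases we apply Kac--Rice to the Gaussian field $f$ on the product of copies of $A$ and $B$, taking advantage of the fact that $A$ and $B$ have disjoint interiors only at the very end, namely to observe that the required integrals are automatically taken over $A^p\times B^p$ with no overlap on the "large diagonal" between the $A$-coordinates and the $B$-coordinates.

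In the case $k<d$, the nodal set is $\mathbb{P}$-a.s. a smooth hypersurface of dimension $d-k$, and the product measure $\nu^{\otimes 2p}$ charges the large diagonal $\Delta\subset(\R^d)^{2p}$ with mass zero, because the trace of $Z^{2p}$ on $\Delta$ has Hausdorff dimension strictly smaller than $2p(d-k)$. Consequently
\[
\nu(A)^p\nu(B)^p=\int_{(A^p\times B^p)\setminus\Delta}\dd\nu^{\otimes 2p}(\x),
\]
and the Kac--Rice formula in \cite[Thm.~6.2]{Aza09}, applied to the vector-valued stationary field $f$, gives $\E[\nu(A)^p\nu(B)^p]=\int_{A^p\times B^p}\rho_{2p}(\x)\dd\x$. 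For the case $k=d$, the nodal set is instead $\mathbb{P}$-a.s. discrete, and the product measure $\nu^{\otimes p}$ does charge the diagonal. I would therefore decompose
\[
\nu(A)^p=\sum_{\I\in\Pt_p}\nu(A)(\nu(A)-1)\cdots(\nu(A)-|\I|+1),\qquad
\nu(B)^p=\sum_{\J\in\Pt_p}\nu(B)(\nu(B)-1)\cdots(\nu(B)-|\J|+1),
\]
(the sums being the standard combinatorial identities expressing a power in terms of falling factorials indexed by partitions), multiply the two decompositions, take expectations, and apply Kac--Rice to each factorial moment separately. This yields exactly $\sum_{\I,\J\in\Pt_p}\int_{A^{\I}}\int_{B^{\J}}\rho_{|\I|+|\J|}(\x_{\I},\y_{\J})\dd\x_{\I}\dd\y_{\J}$, as claimed.

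The nontrivial step in both cases is the finiteness of the resulting integrals. The Kac densities $\rho_m$ are smooth away from the large diagonal but can degenerate dramatically when several points $x_{i_1},\dots,x_{i_q}$ cluster, because the conditioning factor in \eqref{kac-density} involves the determinant of a covariance matrix that becomes nearly singular. This is precisely the main obstacle I expect: bare stationarity and smoothness are not enough. The resolution is to invoke the fine short-range estimates on $\rho_m$ obtained in \cite{Gas23} (for $k<d$) and in \cite{Anc23} (for $k=d$), which under non-degeneracy assumptions $(H1)$ or $(\widetilde H1)$ give explicit polynomial bounds on $\rho_m$ as one approaches the diagonal, yielding local integrability on the $m$-fold product of bounded rectangles.

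Finally, I would assemble the pieces: the local integrability from \cite{Gas23,Anc23} controls the behavior of $\rho_{2p}$ (or $\rho_{|\I|+|\J|}$) near coincidences within $A^p$ or within $B^p$, while the assumption that $A,B$ have disjoint interiors ensures that the cross-coincidences between the two blocks of variables never force us past the diagonal, but only produce inter-block distances that remain bounded away from zero up to a negligible boundary of $\partial A\cap\partial B$. Hence all integrals are finite, and both displayed identities hold, completing the proof.
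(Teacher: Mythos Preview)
Your proposal is correct and follows exactly the route the paper indicates: the paper does not supply a detailed proof of this theorem but presents it as the combination of the Kac--Rice formula \cite[Thm.~6.2]{Aza09} with the finiteness results of \cite{Gas23,Anc23}. Your write-up simply fills in the details the paper omits.

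One point in your final paragraph is misleading and worth correcting. You write that the disjoint-interiors hypothesis is what handles the ``cross-coincidences'' between the $A$-block and the $B$-block for purposes of integrability, with inter-block distances ``bounded away from zero'' away from $\partial A\cap\partial B$. This is not the right picture: points of $A$ and $B$ can be arbitrarily close near the common face, so cross-block clustering is not prevented at all. What actually happens is that the local integrability bounds from \cite{Gas23,Anc23} control \emph{all} near-diagonal behavior of $\rho_m$ on any bounded domain, irrespective of how the coordinates are split between $A$ and $B$; the finiteness of $\int_{A^p\times B^p}\rho_{2p}$ (and of each $\int_{A^{\I}\times B^{\J}}\rho_{|\I|+|\J|}$) needs nothing about $A,B$ beyond boundedness. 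The disjoint-interiors assumption enters only in the $k=d$ case, and for a different reason: together with the fact that a.s.\ no zero lies on the Lebesgue-null set $\partial A\cap\partial B$, it ensures that the zeros in $A$ and the zeros in $B$ are almost surely distinct, so that the product $\nu(A)^{[|\I|]}\,\nu(B)^{[|\J|]}$ genuinely counts ordered $(|\I|+|\J|)$-tuples of \emph{distinct} zeros and the factorial Kac--Rice identity applies to it.
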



We now introduce the (factorial) cumulant Kac-density, defined by 
\begin{equation}\label{e:Fp}
F_p(x_1,\ldots,x_p) = \sum_{\I\in \Pt_p}(|\I|-1)!(-1)^{|\I|-1}\prod_{I\in\I}\rho_{|I|}((x_i)_{i\in I}),
\end{equation}
so that
\[\rho_p(x_1,\ldots,x_p) = \sum_{\I\in \Pt_p}\prod_{I\in\I}F_{|I|}((x_i)_{i\in I}).\]
As announced, to compute the centered moments in terms of cumulant Kac density, there is a  distinction to be made between the case $k=d$ and $k<d$, due to the different combinatorial nature underlying the two settings. One could write a general formula valid for every $p$, based for instance on \cite[Prop. 3.5]{Gas21t}, but for our application we only need to specify it for moments of order $2$ and $4$.
\begin{lemma}
\label{lem1}
Let $f$ be a random field on $\R^d$ satisfying assumptions  $(H1)$ (resp. $(\widetilde{H}1)$). If $k<d$, then
\[E[\nut(A)^2] = \int_{A^2}F_2(\x)\dd \x,\]
\[E[\nut(A)\nut(B)] = \int_{A\times B} F_2(\x)\dd \x,\]
and
\begin{align*}
E[\nut(A)^2\nut(B)^2] = \int_{A^2\times B^2}F_4(\x)\dd x + 2\E[\nut(A)\nut(B)]^2 + \E[\nut(A)^2]\E[\nut(B)^2].
\end{align*}
\end{lemma}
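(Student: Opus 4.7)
The plan is to reduce everything to the \emph{moment--cumulant} duality and use the fact that the $F_p$ defined in \eqref{e:Fp} are, by construction, cumulant densities with respect to the Kac--Rice moment densities $\rho_p$. First I would observe that, since $k<d$, the nodal measure $\nu$ is a.s. non-atomic (it is supported on a smooth $(d-k)$-dimensional manifold). Hence the Kac--Rice formula of Theorem~\ref{thm3} extends by polarization (or by a direct application of the multidimensional co-area/Kac--Rice argument) to a product formula over arbitrary measurable sets $A_1,\dots,A_p$:
\[
\E\!\left[\prod_{i=1}^p \nu(A_i)\right] \;=\; \int_{A_1\times\cdots\times A_p}\rho_p(x_1,\dots,x_p)\,\dd x_1\cdots \dd x_p,
\]
with finiteness guaranteed exactly as in Theorem~\ref{thm3}.

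Next, applying the Möbius inversion relation \eqref{e:Fp} to this product formula, and matching with the classical moment--cumulant identity, I would deduce that $F_p$ is precisely the joint cumulant density: for any measurable $A_1,\dots,A_p$,
\[
\kappa_p\!\bigl(\nu(A_1),\dots,\nu(A_p)\bigr) \;=\; \int_{A_1\times\cdots\times A_p} F_p(\x)\,\dd \x.
\]
The first two identities of the lemma are then immediate consequences, since $\E[\nut(A)^2]=\Var(\nu(A))=\kappa_2(\nu(A),\nu(A))$ and $\E[\nut(A)\nut(B)]=\Cov(\nu(A),\nu(B))=\kappa_2(\nu(A),\nu(B))$.

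For the fourth-order identity, I would set $X:=\nut(A)$, $Y:=\nut(B)$ and apply the general moment--cumulant formula
\[
\E[X_1X_2X_3X_4] \;=\; \sum_{\pi\in\Pt_4}\prod_{I\in\pi}\kappa\bigl((X_i)_{i\in I}\bigr)
\]
with $X_1=X_2=X$, $X_3=X_4=Y$. Because $X$ and $Y$ are centered, every singleton block contributes zero, so only the four partitions with all blocks of size $\ge 2$ survive: the single block $\{1,2,3,4\}$, yielding $\kappa_4(X,X,Y,Y)$, and the three pairings $\{12\}\{34\}$, $\{13\}\{24\}$, $\{14\}\{23\}$, yielding $\Var(X)\Var(Y)+2\Cov(X,Y)^2$. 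Using translation invariance of cumulants of order $\ge 2$ to replace $X,Y$ by $\nu(A),\nu(B)$, the first term equals $\int_{A^2\times B^2}F_4$, and the three remaining terms produce the announced sum $\E[\nut(A)^2]\E[\nut(B)^2]+2\E[\nut(A)\nut(B)]^2$.

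The one step that deserves care, rather than being truly difficult, is the passage from the power formulation $\E[\nu(A)^p\nu(B)^p]$ in Theorem~\ref{thm3} to the polarized product formulation $\E[\prod_i \nu(A_i)]$: this is transparent in the $k<d$ regime (no diagonal contribution thanks to the diffuseness of $\nu$), but needs to be explicitly invoked before the cumulant machinery can be applied. Everything else is bookkeeping on partitions.
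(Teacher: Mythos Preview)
Your proposal is correct and is precisely the argument the paper has in mind: the paper does not spell out a proof of this lemma but presents it as the specialization to $p=2,4$ of the general moment--cumulant formula for nodal measures, referring to \cite[Prop.~3.5]{Gas21t}. Your polarization of the Kac--Rice identity (legitimate since $k<d$ forces $\nu$ to be diffuse), the identification of $F_p$ as the joint cumulant density, and the partition bookkeeping for $\E[X^2Y^2]$ are exactly the steps underlying that reference.
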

\begin{lemma}
\label{lem2} Let the assumptions of Lemma \ref{lem1} prevail. If $k=d$, then
\[E[\nut(A)^2] = \int_{A^2}F_2(\x)\dd \x + \int_{A}F(x)\dd x,\]
\[E[\nut(A)\nut(B)] = \int_{A\times B} F_2(\x)\dd \x,\]
and
\begin{align*}
E[\nut(A)^2\nut(B)^2] &= \int_{A^2\times B^2}F_4(\x)\dd x + \int_{A\times B^2}F_3(\x)\dd x + \int_{A^2\times B}F_3(\x)\dd x + \int_{A\times B}F_2(\x)\dd x\\
&+\vphantom{\int^{\int}}\quad 2\E[\nut(A)\nut(B)]^2 + \E[\nut(A)^2]\E[\nut(B)^2].
\end{align*}
\end{lemma}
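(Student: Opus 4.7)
The plan is to reduce $E[\tilde\nu(A)^2\tilde\nu(B)^2]$ to a sum of joint cumulants of $\nu(A),\nu(B)$ via the classical moment-to-cumulant inversion, and then evaluate those cumulants using Theorem~\ref{thm3} together with the Möbius relation $\rho_p=\sum_{\pi\in\Pt_p}\prod_{I\in\pi} F_{|I|}$ that defines $F_p$.

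First I would dispatch the identities for $E[\tilde\nu(A)^2]$ and $E[\tilde\nu(A)\tilde\nu(B)]$ directly. Using Theorem~\ref{thm3} at $p=1$ one has $E[\nu(A)\nu(B)]=\int_{A\times B}\rho_2\,\dd\x$ for disjoint $A,B$, and the analogous one-set computation gives $E[\nu(A)^2]=\int_A\rho_1\,\dd x+\int_{A^2}\rho_2\,\dd\x$. Subtracting $E[\nu(A)]E[\nu(B)]=\int_{A\times B}\rho_1\otimes\rho_1$, resp.\ $E[\nu(A)]^2=\int_{A^2}\rho_1\otimes\rho_1$, and invoking $F_2=\rho_2-\rho_1\otimes\rho_1$ (together with $F=F_1=\rho_1$) yields the two stated formulas.

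Next I would establish the general cumulant identity
\[
\kappa_n\bigl(\nu(A_1),\ldots,\nu(A_n)\bigr)=\sum_{\pi\in\Pt_n}\int F_{|\pi|}\bigl((x_I)_{I\in\pi}\bigr)\prod_{I\in\pi}\one_{\bigcap_{i\in I}A_i}(x_I)\,\dd x_\pi,
\]
which is a purely combinatorial consequence of Theorem~\ref{thm3} and the fact that the defining relation \eqref{e:Fp} is exactly the moment-to-cumulant inversion. One verifies it by expanding $E\bigl[\prod_i\nu(A_i)\bigr]=\sum_{\pi}\int\rho_{|\pi|}\prod_I\one_{\cap A_i}$ (Theorem~\ref{thm3}), substituting $\rho_{|\pi|}=\sum_{\sigma}\prod F$ on the right, and matching with the moment-cumulant expansion $E[\prod X_i]=\sum_\pi\prod_I\kappa(X_i:i\in I)$ applied to $X_i=\nu(A_i)$.

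With this in hand, I would apply the moment-to-cumulant formula to the \emph{centered} vector $(\tilde\nu(A),\tilde\nu(A),\tilde\nu(B),\tilde\nu(B))$. Since $\kappa_1(\tilde\nu(\cdot))=0$ and cumulants of order $\geq 2$ are shift-invariant, only the partitions of $\{1,2,3,4\}$ with \emph{no} singleton survive, namely one of type $4$ and three of type $2+2$; this produces
\[
E[\tilde\nu(A)^2\tilde\nu(B)^2]=\kappa_4\bigl(\nu(A),\nu(A),\nu(B),\nu(B)\bigr)+E[\tilde\nu(A)^2]\,E[\tilde\nu(B)^2]+2\,E[\tilde\nu(A)\tilde\nu(B)]^2.
\]
Finally I would compute $\kappa_4$ from the cumulant identity above. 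Since $A\cap B=\void$, any partition containing a block that mixes an $A$-label with a $B$-label forces $\one_{\cap A_i}\equiv 0$; enumerating the remaining partitions of $\{1,2,3,4\}$ (with labels $A,A,B,B$) leaves exactly four contributing ones:
$\{\{1,2\},\{3,4\}\}\rightsquigarrow\int_{A\times B}F_2$,
$\{\{1,2\},\{3\},\{4\}\}\rightsquigarrow\int_{A\times B^2}F_3$,
$\{\{1\},\{2\},\{3,4\}\}\rightsquigarrow\int_{A^2\times B}F_3$,
and $\{\{1\},\{2\},\{3\},\{4\}\}\rightsquigarrow\int_{A^2\times B^2}F_4$, whose sum is the announced expression.

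The only delicate point, and what I expect to be the main obstacle, is the justification of the cumulant identity in a setting where the $A_i$'s are allowed to coincide: one must carefully track the interplay between the "diagonal" (coincidence) contributions, which are nonzero exactly when the $A_i$'s in a block have a common intersection, and the classical Möbius inversion. Everything else reduces to careful bookkeeping of partitions and no new analytic estimate is required beyond the finiteness statement in Theorem~\ref{thm3}.
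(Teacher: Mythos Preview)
Your proposal is correct and matches the approach the paper has in mind: the paper does not give an explicit proof of this lemma but simply observes that the formulas follow from the Kac--Rice expansion (Theorem~\ref{thm3}) combined with the moment--cumulant combinatorics, citing \cite[Prop.~3.5]{Gas21t} for the general recipe, which is exactly the route you take. One tiny caveat: you invoke $A\cap B=\void$, whereas the paper only assumes $A,B$ have disjoint interiors (adjacent rectangles), but since $\partial A\cap\partial B$ has Lebesgue measure zero this makes no difference in the integrals and your argument goes through unchanged.
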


\smallskip

 In the next two sections, we establish the estimates that are necessary to prove our main results.

\subsection{Second moment estimate}
The following lemma concerns the behavior of the Kac density $F_2$ defined in \eqref{e:Fp}. We define the parameter
\[\beta  = \left\lbrace\begin{array}{llll}
0 \qquad\;\;\,\text{if}\; d=1,\\
d-2 \quad\text{if}\;k=d,\\
k \qquad\;\;\,\text{if}\;k<d.
\end{array}\right.\]
\begin{lemma}
\label{lem3}
Let $f$ be a random field on $\R^d$ satisfying assumptions  $(H1)$ (resp. $(\widetilde{H}1)$) and $(H2)$. Then there is a constant $C$, depending only on the process $f$, such that
\[|F_2(x,y)|\leq C\|x-y\|^{-\beta}\one_{\|x-y\|\leq 1} + Cg^2(\|x-y\|)\one_{\|x-y\|\geq 1}.\]
In particular, the function $F_2$ is integrable.
\end{lemma}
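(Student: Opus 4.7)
The plan is to split the domain according to $\|x-y\|\leq 1$ and $\|x-y\|\geq 1$, isolating the two sources of blow-up: long-range decorrelation at the outer scale and local degeneracy of the conditional Gaussian density near the diagonal. By stationarity one may set $x=0$ and write $h=y-x$.

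For the far regime $\|h\|\geq 1$, the bound $|F_2|\lesssim g(\|h\|)^2$ should follow from the fact that the cumulant $F_2$ is quadratic in the off-diagonal correlations. Concretely, representing $F_2(x,y) = \rho_2(x,y) - \rho_1^2$ via a Hermite expansion around the decorrelated case, the zeroth-order term equals $\rho_1^2$ and cancels in the subtraction, and the first-order term vanishes by parity (the joint Gaussian density at $0$ has no linear term in the off-diagonal covariance block, since $\det$ is quadratic in perturbations near a symmetric positive matrix). The leading contribution is therefore quadratic in the derivatives of the covariance kernel evaluated at $h$, each of which is bounded by $g(\|h\|)$ by $(H2)$; the uniform non-degeneracy from $(H1)$ or $(\widetilde{H}1)$ keeps the prefactors bounded. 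Square integrability of $g$ then yields integrability of $F_2$ at infinity.

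For the near regime $\|h\|\leq 1$, write $u = h/\|h\|$ and Taylor-expand $f(y) = f(x) + \|h\|\,\nabla f(x)u + O(\|h\|^2)$. A linear change of variables shows $\det\bigl(2\pi\,\Var(f(x),f(y))\bigr) \sim C\|h\|^{2k}$, so the denominator of \eqref{kac-density} contributes a factor $\|h\|^{-k}$. When $k<d$, the conditional expectation in the numerator of \eqref{kac-density} remains bounded as $h\to 0$: the conditioning pins only one $k$-dimensional directional derivative $\nabla f(x)u$ to be of size $O(\|h\|)$, while the remaining $d-1$ tangential directions are unconstrained, so $\sqrt{\det(\nabla f(x)(\nabla f(x))^T)}$ is $O(1)$; this yields $|F_2|\lesssim\|h\|^{-k}$, i.e.\ $\beta = k$. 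When $k=d\geq 2$, the same conditioning forces \emph{both} $|\det\nabla f(x)|$ and $|\det\nabla f(y)|$ to vanish at rate $\|h\|$, producing an extra factor $\|h\|^2$ in the numerator and reducing the singularity to $\|h\|^{2-d}$, i.e.\ $\beta = d-2$. For $d=k=1$, stationarity already makes $f(x)$ and $f'(x)$ independent, and the two effects cancel so that $\rho_2$ remains bounded. In every case $\beta<d$, hence the bound is integrable near the diagonal, and combined with the far estimate this yields $F_2\in L^1(\R^d)$.

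The main technical obstacle is the $k=d\geq 2$ case, where the vanishing of $|\det\nabla f(y)|$ given $\{f(x) = f(y) = 0\}$ is not visible from a crude first-order Gaussian regression. The natural route is to diagonalize in an orthonormal basis whose first vector is $u$, rewrite $f(y) = 0$ as a condition on $\nabla f(x)u$ together with a second-order remainder involving $\Hess f(x)$, and then regress $\nabla f(y)u$ onto $(f(x), \nabla f(x)u, \Hess f(x))$ to explicitly produce the required factor $\|h\|$. The smoothness and non-degeneracy granted by $(H1)$ or $(\widetilde{H}1)$ at the level of second-order jets of $f$ justify the uniform estimates and the dominated convergence used to extract a constant independent of $h$. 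The far-regime perturbation argument, in contrast, is essentially routine once the cumulant cancellation has been made explicit.
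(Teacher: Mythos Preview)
Your proposal follows the same two-regime strategy as the paper, which itself is extremely brief and simply cites \cite{Gas23,Gas21b} for the near-diagonal divided-difference bound and \cite{Gas21b,Anc24} for the quadratic far-field decay. Your near-diagonal heuristic (Taylor expansion, identifying the $\|h\|^{-k}$ from the density and the compensating $\|h\|^2$ from the conditional vanishing of both Jacobians when $k=d$) is a correct informal version of the divided-difference argument invoked in the paper.

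One point to tighten in the far regime: your parity justification only covers the \emph{denominator} of the Kac density. It is true that for a block matrix with off-diagonal block $\varepsilon B$ one has $\det\Sigma=\det\Sigma_1\det\Sigma_2+O(\varepsilon^2)$, so the Gaussian density of $(f(x),f(y))$ at $(0,0)$ has no linear term. But $\rho_2$ is this density multiplied by the conditional expectation $\E\bigl[\,|J_f(x)||J_f(y)|\;\big|\;f(x)=f(y)=0\bigr]$, and the conditional law of $(\nabla f(x),\nabla f(y))$ still carries an off-diagonal covariance of order $\varepsilon$. Showing that this produces no first-order contribution to the product expectation is not a determinant argument; it requires either the explicit Hermite expansion of the Jacobian factor (where the order-one coefficients $\E[|J_f|\,\partial_i f_j]$ vanish by the symmetry $\nabla f\mapsto-\nabla f$), or the perturbation analysis of \cite[Lemma~8.2]{Anc24} that the paper invokes. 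Once you make that step precise, your argument is complete and matches the paper's.
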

\begin{proof}
From the introduction of \cite{Gas23} or the ideas in \cite{Gas21b}, one can show by the method of divided differences that one has the near-diagonal bound
\[\forall \|x-y\|\leq 1,\quad \rho_2(x,y)\leq C\|x-y\|^{-\beta}.\]
Since $\rho_1$ is a constant function, the near-diagonal inequality with $\rho_2$ replaced by $F_2$ follows.\jump

If the Gaussian vectors $(f(x), \nabla_x f)$ and $(f(y),\nabla_y f)$ are independent, notice from Equation \ref{kac-density} that $\rho_2(x,y) = \rho_1(x)\rho_1(y)$. It implies from \eqref{e:Fp} that $F(x,y) = 0$. When the distance between $x$ and $y$ is large, Hypothesis $(H2)$ implies that these two Gaussian vectors are close to independence, and we expect $|F(x,y)|$ to be small. The techniques introduced in \cite{Gas21b}, see also \cite[Lemma 8.2]{Anc24}, prove the inequality
\[\forall \|x\|\geq 1,\quad F_2(x)\leq C g^2(\|x\|).\]
\end{proof} 
We are now ready to state the announced second-moment estimate. Let $\eta$ be a positive parameter such that 
\[\beta/(1-\eta)<d\numberthis\label{eq:07}.\]
\begin{proposition}
\label{prop1}
 Let the assumptions of Lemma \ref{lem3} prevail. Then there is a positive exponent $\eta$ and a constant $C$ depending only on $\eta$ and on the process $f$, such that
\[\E[|\nut(A)\nut(B)|]\leq C\left(\prod_{j=1}^d t_j\right)\min\left(1, \left(\prod_{j=1}^i t_j\right)^{\eta}\left(\prod_{j=i+1}^d t_j\right)\right).\]
\end{proposition}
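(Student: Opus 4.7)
The plan is to establish the two arguments of the $\min$ separately, and then combine them.

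For the first (``$1$'') branch, the claim $\E[|\nut(A)\nut(B)|]\leq C\prod_j t_j$ follows from Cauchy--Schwarz together with the estimate $\E[\nut(S)^2]\leq C\Vol(S)$ for any rectangle $S\subseteq [0,t]$. Lemmas \ref{lem1}--\ref{lem2} express this second moment as $\int_{S^2} F_2(x-y)\,dx\,dy$ (plus the linear diagonal contribution $\rho_1\Vol(S)$ when $k=d$), and after the change of variable $u=x-y$ this integral is bounded by $\Vol(S)\,\|F_2\|_{L^1(\R^d)}$. By Lemma \ref{lem3} combined with $\beta<d$ and the decay of $g$, one has $\|F_2\|_{L^1}<\infty$. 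Finally, $\sqrt{\Vol(A)\Vol(B)}\leq \Vol(A\cup B)/2 = \prod_j t_j/2$ concludes this part.

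For the second branch, Cauchy--Schwarz is not strong enough when $k=d$, since the linear diagonal term in the variance cannot be improved. Instead, I would use the pointwise bound $|\nut(S)|\leq \nu(S)+\rho_1\Vol(S)$, valid as both summands are non-negative. Expanding and taking expectations gives $\E[|\nut(A)\nut(B)|]\leq \E[\nu(A)\nu(B)]+3\rho_1^2 |A||B|$, and the Kac--Rice formula of Theorem \ref{thm3} (applicable in both regimes since $A$ and $B$ have disjoint interiors), combined with $\rho_2=F_2+\rho_1^2$ from \eqref{e:Fp}, gives
\[\E[|\nut(A)\nut(B)|]\leq \int_{A\times B}|F_2(x-y)|\,dx\,dy+4\rho_1^2|A||B|.\]
The additive term $\rho_1^2|A||B|\leq \rho_1^2\prod_j t_j^2$ fits within the second branch, since $t_j^2\leq t_j^{1+\eta}$ for $t_j<1$ and $\eta\leq 1$, the latter inequality being built into the choice of $\eta$ from \eqref{eq:07}.

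The core of the proof is the estimate
\[\int_{[0,t]^2}|F_2(x-y)|\,dx\,dy\leq C\Bigl(\prod_{j=1}^d t_j\Bigr)\prod_{j=1}^i t_j^{1-\beta/d}.\]
After the substitution $u=x-y$ and the bound $(t_j-|u_j|)_+\leq t_j\one_{|u_j|\leq t_j}$, this reduces to controlling $\int_{\{|u_j|\leq t_j\}}|F_2(u)|\,du$. Splitting via Lemma \ref{lem3} into near- and far-diagonal regions, the decisive tool for the near-diagonal part is the AM--GM inequality $\|u\|^{-\beta}\leq C\prod_j |u_j|^{-\beta/d}$, which fully decouples the coordinates: the integral factorises into one-dimensional integrals $\int_{-t_j}^{t_j}|u_j|^{-\beta/d}\,du_j = C t_j^{1-\beta/d}$ in the flat directions $j\leq i$, while in the fat directions $j>i$ the constraint $\|u\|\leq 1$ restricts $|u_j|\leq 1$ and produces bounded factors. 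Convergence of each one-dimensional integral is ensured by $\beta/d<1$. The far-diagonal contribution is handled by integrability of $g^2$ in the fat coordinates and the restriction $|u_j|\leq t_j<1$ in the flat ones, yielding a lower-order term $\prod_{j\leq i}t_j$ absorbed into the near-diagonal bound. Finally, the choice $\eta<1-\beta/d$ gives $t_j^{1-\beta/d}\leq t_j^\eta$ in the flat directions, while $t_j\leq t_j^2$ in the fat directions (using $t_j\geq 1$) yields the second branch. The main obstacle is this delicate coordinate-wise integration, where the AM--GM decoupling and the asymmetric handling of flat versus fat directions are key.
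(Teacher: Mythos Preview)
Your overall architecture coincides with the paper's: Cauchy--Schwarz plus integrability of $F_2$ for the ``$1$'' branch, and the expansion $|\nut(S)|\leq\nu(S)+\rho_1\Vol(S)$ followed by Kac--Rice for the second branch. The one genuine difference is the device used for the near-diagonal integral $\int_{[-t,t]\cap B(0,1)}\|u\|^{-\beta}\,du$. The paper applies H\"older's inequality with exponents $(1/\eta,1/(1-\eta))$ to get
\[
\int_{[-t,t]\cap B(0,1)}\|u\|^{-\beta}\,du\leq \Vol([-t,t]\cap B(0,1))^{\eta}\Bigl(\int_{B(0,1)}\|u\|^{-\beta/(1-\eta)}\,du\Bigr)^{1-\eta}\leq C\prod_{j\leq i}t_j^{\eta},
\]
valid for any $\eta$ with $\beta/(1-\eta)<d$. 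Your AM--GM decoupling $\|u\|^{-\beta}\leq C\prod_j|u_j|^{-\beta/d}$ followed by coordinate-wise integration is a legitimate alternative and even yields the endpoint exponent $1-\beta/d$ directly; both approaches exploit $\beta<d$ in the same spirit.

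There is one soft spot. Your claim that the far-diagonal contribution $\int_{[-t,t]\setminus B(0,1)}g^2(\|u\|)\,du$ is of order $\prod_{j\leq i}t_j$ via ``integrability of $g^2$ in the fat coordinates'' is not obviously justified under the bare hypothesis $g\in L^2(\R_+)$: the integrand does not factor, and slicing gives a function of the flat variables that need not be uniformly bounded without extra structure (e.g.\ monotonicity of $g$). The paper sidesteps this by using only that $\rho_2$ is bounded on $\{\|x-y\|\geq 1\}$, which yields the coarser far-diagonal bound $C\bigl(\prod_j t_j\bigr)^2$; this is already enough, since $\bigl(\prod_j t_j\bigr)^2\leq\bigl(\prod_{j\leq i}t_j\bigr)^{1+\eta}\bigl(\prod_{j>i}t_j\bigr)^2$. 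Replacing your far-diagonal claim with this simpler bound closes the argument.
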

\begin{proof}
On one hand, one has from Lemma \ref{lem3}.
\begin{align*}
E[|\nut(A)\nut(B)|]&\leq \E[\nu(A)\nu(B)] + 3\E[\nu(A)]\E[\nu(B)]\\
&\leq \int_{[0,t]^2}\rho_2(x-y)\dd x\dd y + 3C\left(\prod_{j=1}^d t_j\right)^2\\
&\leq C\left(\prod_{j=1}^d t_j\right)\int_{[-t,t]\cap B(0,1)}\|x\|^{-\beta}\dd x + C\left(\prod_{j=1}^d t_j\right)^2.
\end{align*}
Using Hölder inequality, one gets from the choice of $\eta$ in \eqref{eq:07} the bound
\begin{align*}
\int_{[-t,t]\cap B(0,1)}\|x\|^{-\beta}\dd x&\leq \Vol([-t,t]\cap B(0,1))^{\eta}\left(\int_{B(0,1)}\|x\|^{-\frac{\beta}{1-\eta}}\dd x\right)^{1-\eta}\\
&\leq C\left(\prod_{j=1}^i t_j\right)^{\eta},\numberthis\label{eq:03}
\end{align*}
which implies that
\[E[|\nut(A)\nut(B)|]\leq C\left(\prod_{j=1}^i t_j\right)^{1+\eta}\left(\prod_{j=i+1}^d t_j\right)^2.\]
On the other hand, one has again from Lemma \ref{lem3}, and Lemma \ref{lem2},
\begin{align*}
\E[|\nut(A)\nut(B)|]&\leq \sqrt{\E[\nut(A)^2]\E[\nut(B)^2]}\\
&\leq C\int_{[0,t]^2}|F_2(x,y)|\dd x\dd y + C\int_{[0,t]}\rho_1\dd x\\
&\leq \left(\prod_{j=1}^d t_j\right)\int_{[0,t]}|F_2(x)|\dd x + C\left(\prod_{j=1}^d t_j\right)\numberthis\label{eq:02}\\
&\leq C\left(\prod_{j=1}^d t_j\right),
\end{align*}
since the function $F_2$ is integrable. Gathering both estimates, the conclusion follows.
\end{proof}
\begin{remark}
\label{rem1}
When $k<d$, one can prove in a similar manner the stronger bound,
\[\E[|\nut(A)\nut(B)|]\leq C\left(\prod_{j=1}^i t_j\right)^{1+\eta}\left(\prod_{j=i+1}^d t_j\right),\]
since the right hand term $\int_{[0,1]}\rho_1\dd x$ disappear in \eqref{eq:02}, and following Equation \eqref{eq:03}, the estimate of the left hand term can be improved to
\[\int_{[0,t]}|F_2(x)|\dd x\leq \left(\prod_{j=1}^i t_j\right)^\eta.\] 
\end{remark}
\subsection{Fourth moment estimate}
It has been proved in the recent paper \cite{Anc24} that the cumulant Kac density enjoys nice integrability properties. In particular, under our set of assumptions, the results of \cite[Sec. 7.3]{Anc24} implies the following lemma.
\begin{lemma}
\label{lem4}
 Let the assumptions of Lemma \ref{lem3} prevail. Then there is a constant $C$ depending only on the process $f$ such that
\[\int_{[0,t]^p} |F_p(x_1,\ldots,x_p)|\dd x_1,\ldots\dd x_p\leq C\left(\prod_{j=1}^i t_j\right)\left(\prod_{j=i+1}^d t_j\right)^{p/2}.\]\end{lemma}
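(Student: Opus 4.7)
Lemma \ref{lem4} is stated as a direct consequence of the cumulant Kac density estimates developed in \cite[Sec.~7.3]{Anc24}, so the plan is to extract the anisotropic product form of the bound from that machinery.

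The starting input is the variance-scaling cumulant estimate of \cite[Sec.~7.3]{Anc24}: under assumptions $(H1)$ (or $(\widetilde{H}1)$) and $(H2)$, for cubic domains $[0,R]^d$ with $R \geq 1$, one has
\begin{equation*}
\int_{([0,R]^d)^p} |F_p(x_1, \ldots, x_p)| \, dx_1 \cdots dx_p \leq C R^{dp/2}.
\end{equation*}
This is the quantitative cumulant bound driving the CLTs of \cite{Anc24}. Its proof combines near-diagonal divided-difference bounds in the spirit of Lemma \ref{lem3} (used to control each $\rho_{|I|}$ appearing in the alternating sum \eqref{e:Fp}) with the Wiener-chaos decomposition of $\nut$ and Gaussian hypercontractivity at the level of $p$-th cumulants.

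The next step is to transfer this bound to the anisotropic rectangle $[0, t]$. The argument of \cite[Sec.~7.3]{Anc24} is essentially coordinate-wise: stationarity of $f$ in each variable makes the estimate translation-invariant, so it extends directly to rectangles with all sides $t_j \geq 1$, producing $\int_{[0,t]^p} |F_p| \leq C \prod_j t_j^{p/2}$. To handle rectangles with ``small'' sides ($t_j < 1$ for $j \leq i$), observe that the hypercontractivity-type bound overshoots in such directions: along a short interval of length $t_j$, stationarity of $F_p$ in relative coordinates of the $j$-th axis, combined with a Fubini argument, costs only a linear factor $t_j$ (the Lebesgue measure of the interval), rather than the power $t_j^{p/2}$. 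Iterating this one-dimensional reduction across the $i$ small directions produces a factor $\prod_{j \leq i} t_j$ in place of $\prod_{j \leq i} t_j^{p/2}$. Combined with the contribution $\prod_{j > i} t_j^{p/2}$ from the large directions, this yields the stated estimate.

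The main technical obstacle is the verification that the bounds of \cite[Sec.~7.3]{Anc24} factorize cleanly across coordinate directions, with constants uniform in the anisotropic shape of the rectangle; equivalently, that the small-direction reduction can be carried out without disturbing the variance-scaling structure in the remaining large directions. This factorization is guaranteed by stationarity of $f$ and by the translation-invariance of the estimates, but needs to be tracked carefully through the combinatorial arguments controlling \eqref{e:Fp}. In the present note, the key input is invoked as a black box from \cite{Anc24}.
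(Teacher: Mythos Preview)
Your plan shares the two key ingredients with the paper's proof --- the black-box cumulant bound from \cite[Sec.~7.3]{Anc24} for domains with all sides at least $1$, and translation invariance of $F_p$ to extract a linear factor in the short directions --- so the overall strategy is correct. The execution differs, however. You propose to iterate a one-dimensional reduction across each small coordinate $j\le i$, worrying at the end about whether the \cite{Anc24} estimates ``factorize cleanly across coordinate directions.'' The paper avoids this coordinate-wise bookkeeping entirely: it applies full $\R^d$-translation invariance \emph{once} to integrate out $x_1$, pulling out the whole volume $\prod_{j=1}^d t_j$ and leaving a $(p-1)$-fold integral over $[-t,t]^{p-1}$; it then enlarges the small coordinates to $1$ (replacing $t$ by $\tilde t$), and finally \emph{reinserts} a $p$-th integration variable over $[-\tilde t,\tilde t]$ by translation invariance again, which divides out the unwanted factor $\prod_{j>i} t_j$ and produces a clean $p$-fold integral over $[-2\tilde t,2\tilde t]^p$ to which \cite{Anc24} applies directly. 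This ``pull out the full volume, then put back the large part'' maneuver is simpler than your iteration and makes the factorization concern evaporate. A minor side note: your description of the \cite{Anc24} input as coming from Wiener-chaos decomposition and hypercontractivity is not accurate --- \cite{Anc24} uses cumulant/divided-difference methods rather than chaos expansions --- though this does not affect the logic here since both you and the paper treat that estimate as a black box.
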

\begin{proof}
Let $\tilde{t}$ be the vector $(1,\ldots,1,t_{i+1},\ldots,t_d)$, that is the vector $t$ such that all the coordinates lower than $1$ have been replaced by $1$. Let us observe the simple bound that follows from the invariance by translation
\begin{align*}
\int_{[0,t]^p} |F_p(x_1,\ldots,x_p)|\dd x_1,\ldots\dd x_p&\leq \left(\prod_{j=1}^d t_j\right)\int_{[-t,t]^{p-1}} |F_p(x_2,\ldots,x_p)|\dd x_2 \ldots\dd x_p\\
&\leq \left(\prod_{j=1}^d t_j\right)\int_{[-\tilde{t},\tilde{t}]^{p-1}} |F_p(0,x_2,\ldots,x_p)|\dd x_2\ldots\dd x_p\\
&\leq C\left(\prod_{j=1}^it_j\right)\int_{[-2\tilde{t},2\tilde{t}]^p} |F_p(x_1,\ldots,x_p)|\dd x_1\ldots\dd x_p\\
\end{align*}
To estimate the right hand side, it suffices the results in \cite[Sec. 7.3]{Anc24} yield a constant $C$ depending only on the function $g$ and the exponent $p$ such that
\[\int_{[-2\tilde{t},2\tilde{t}]^p} |F_p(x_1,\ldots,x_p)|\dd x_1\ldots\dd x_p\leq C\left(\prod_{j=i+1}^d t_j\right)^{p/2}.\]
\end{proof}
We are now ready to state the following fourth moment estimate.
\begin{proposition}
\label{prop2} Let the assumptions of Lemma \ref{lem1} prevail. Then there is a constant $C$ depending only on the process $f$ such that
\[\E[\nut(A)^2\nut(B)^2]\leq C\left(\prod_{j=1}^i t_j\right)\left(\prod_{j=i+1}^d t_j\right)^2.\]
\end{proposition}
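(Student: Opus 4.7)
The plan is to decompose $\E[\nut(A)^2\nut(B)^2]$ via the formulas of Lemma~\ref{lem1} (when $k<d$) or Lemma~\ref{lem2} (when $k=d$). In both cases, the moment is expressed as a sum of three types of terms: (i) cumulant--density integrals $\int F_p$ with $p\in\{2,3,4\}$ over products of copies of $A$ and $B$; (ii) a product of second moments $\E[\nut(A)^2]\E[\nut(B)^2]$; and (iii) the squared mixed covariance $\E[\nut(A)\nut(B)]^2$. Abbreviating $T_1:=\prod_{j=1}^i t_j\leq 1$ and $T_2:=\prod_{j=i+1}^d t_j\geq 1$, the announced bound reads $C T_1 T_2^2$, and the task reduces to controlling each of the three types by this quantity.

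For terms of type (i), since $A,B\subset[0,t]$, each such integral is dominated by $\int_{[0,t]^p}|F_p|$, which Lemma~\ref{lem4} controls by $C T_1 T_2^{p/2}$. Because $T_2\geq 1$ and only $p\in\{2,3,4\}$ appear in the decomposition, every such contribution is at most $C T_1 T_2^2$, with the fourth-order integral $\int_{A^2\times B^2}|F_4|$ producing the dominant term and matching the target exactly. For terms of type (ii), Lemma~\ref{lem4} with $p=2$, together with the harmless diagonal correction $\rho_1\Vol([0,t])\leq CT_1T_2$ appearing only in the $k=d$ decomposition from Lemma~\ref{lem2}, yields $\E[\nut(A)^2],\E[\nut(B)^2]\leq C T_1 T_2$; the product is then at most $C T_1^2 T_2^2\leq C T_1 T_2^2$, using $T_1\leq 1$. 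For terms of type (iii), Cauchy--Schwarz reduces the estimate to the previous one: $\E[\nut(A)\nut(B)]^2\leq \E[\nut(A)^2]\E[\nut(B)^2]\leq C T_1 T_2^2$. Summing these three contributions gives the claim.

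The main difficulty of the whole argument is not in this proposition but is concentrated upstream, in Lemma~\ref{lem4}, which encodes the sharp cumulant bounds of \cite{Anc24}. Its crucial feature is the asymmetric scaling in which the short coordinates contribute a factor $T_1$ with power one rather than the naive $p/2$ one would obtain from a product estimate; this is precisely what allows the resulting bound to be linear in $T_1$. Granted Lemma~\ref{lem4}, the remaining work above is essentially bookkeeping: one has to check that every power of $T_2$ produced by a cumulant integral stays at or below $2$, which is automatic since $p/2\leq 2$ whenever $p\leq 4$, and one has to verify that the additional $F_3$ and $F_2$ contributions present only in the $k=d$ case of Lemma~\ref{lem2} admit the same uniform treatment via the same application of Lemma~\ref{lem4}.
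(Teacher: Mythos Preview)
Your proposal is correct and follows essentially the same route as the paper: decompose $\E[\nut(A)^2\nut(B)^2]$ via Lemma~\ref{lem1} or Lemma~\ref{lem2}, bound each cumulant integral $\int_{[0,t]^p}|F_p|$ by $C T_1 T_2^{p/2}$ using Lemma~\ref{lem4}, and absorb the product-of-variances and squared-covariance terms by the same $p=2$ estimate together with $T_1\leq 1$. The paper's proof is simply a terser version of what you wrote, collapsing all contributions into a single displayed inequality.
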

\begin{proof}
Using the expression of the left hand side given by Lemma \ref{lem1} and Lemma \ref{lem2}, and the estimate given by Lemma \ref{lem4}, one obtains the bound
\begin{align*}
\E[\nut(A)^2\nut(B)^2]&\leq C\left(\prod_{j=1}^i t_j\right)\left[\sum_{p=2}^4 \left(\prod_{j=i+1}^d t_j\right)^{p/2} + \left(\prod_{j=i+1}^d t_j\right)^2\right]\\
&\leq C'\left(\prod_{j=1}^i t_j\right)\left(\prod_{j=i+1}^d t_j\right)^2
\end{align*}
\end{proof}
\subsection{Proof of Theorem \ref{thm2}}
\begin{proof}
Let $R\geq 1$ and assume that the two rectangles $A$, $B$ are subsets of the cube $[0,R]^d$. We first use the Cauchy--Schwarz bound and the two moment bounds given by Proposition \ref{prop1} and Proposition \ref{prop2} to get
\begin{align*}
\E[|\nut(A)|^{3/2}|\nut(B)|^{3/2}]&\leq \sqrt{\E[|\nut(A)\nut(B)|]}\sqrt{\E[\nut(A)^2\nut(B)^2]}\\
&\leq C\left(\prod_{j=1}^i t_j\right)\left(\prod_{j=i+1}^d t_j\right)^{3/2}\min\left(1, \left(\prod_{j=1}^i t_j\right)^{\eta/2}\left(\prod_{j=i+1}^d t_j\right)^{1/2}\right)
\end{align*}
If $i=0$, we can bound this term as 
\[\E[|\nut(A)|^{3/2}|\nut(B)|^{3/2}] \leq C\Vol(A\cup B)^{3/2}.\]
If $i\geq 1$, we can use the inequality $\min(1,a)\leq a^\varepsilon$, for $\varepsilon > 0$ to get
\[\E[|\nut(A)|^{3/2}|\nut(B)|^{3/2}]\leq C\Vol(A\cup B)^{1+\frac{\varepsilon \eta}{2}}R^{\frac{1}{2}(d-1)(1 + \varepsilon - \varepsilon\eta)}.\]
Gathering both cases, we get the following bound, which depends only on the volume of $A\cup B$ and the parameter $R$:
\[\E[|\nut(A)|^{3/2}|\nut(B)|^{3/2}] \leq C\Vol(A\cup B)^{3/2} + C\Vol(A\cup B)^{1+\frac{\varepsilon \eta}{2}}R^{\frac{1}{2}(d-1)(1 + \varepsilon - \varepsilon\eta)}.\numberthis\label{eq:05}\]
Now consider the rescaled measure $\nut_R$ on the cube $[0,1]^d$, defined in \eqref{eq:04}. The previous Inequality \eqref{eq:05} then implies that for two adjacent rectangles $A,B$ in $[0,1]^d$, one has
\begin{align*}
\E[|\nut_R(A)|^{3/2}|\nut_R(B)|^{3/2}]&= \frac{1}{\gamma_2^2R^{3d/2}}\E[|\nut(RA)|^{3/2}|\nut(RB)|^{3/2}]\\
&\leq C\Vol(A\cup B)^{3/2} + C\frac{\Vol(A\cup B)^{1+\varepsilon\eta}}{R^{\frac{1}{2}(1 + \varepsilon - \varepsilon\eta - d\varepsilon)}}\\
&\leq C\Vol(A\cup B)^{1+\frac{\varepsilon\eta}{2}},\numberthis\label{eq:09}
\end{align*}
since $A\cup B\in [0,1]^d$, $R\geq 1$ and $\varepsilon$ can be chosen so that $1 + \varepsilon - \varepsilon\eta - d\varepsilon>0$.
\end{proof}
\begin{remark}
In the case $k<d$ one can get, following Remark \ref{rem1}, the more precise bound
\[\E[|\nut_R(A)|^{3/2}|\nut_R(B)|^{3/2}]\leq C\Vol(A\cup B)^{1+\frac{\eta}{2}}.\numberthis\label{eq:10}\]
The choice of the parameter $\alpha$ in \ref{thm2} follows from \eqref{eq:09} and \eqref{eq:10}, and the definition \eqref{eq:07} of $\eta$.
\end{remark}
\printbibliography
\end{document}